\newtheorem*{Lem}{Lemma}
\newtheorem*{Prop}{Proposition}
\newtheorem*{Cor}{Corollary}
\newtheorem*{Thm}{Theorem}
\newtheorem*{cass}{Casselman's pairing}
\numberwithin{equation}{subsection}
\theoremstyle{remark}
\newtheorem*{Rmk}{Remark}
\newcommand{\spi}{\varepsilon(\pi)}
\newcommand{\spit}{\varepsilon_{\theta}(\pi)}
\newcommand{\spip}{\varepsilon_{ \mathstrut  \theta'}(\pi)}
\newcommand{\spin}{\varepsilon_{\theta}(\pi_N)}
\newcommand{\sptn}{\varepsilon_{\theta}(\tau)}
\newcommand{\pv}{(\pi,V)}
\newcommand{\cx}{\mathbb{C}}
\newcommand{\ov}{\overline{v}} 
\newcommand{\ou}{\overline{u}} 
\newcommand{\pic}{\pi\spcheck}
\newcommand{\tc}{\tau\spcheck}
\newcommand{\cv}{V\spcheck}
\newcommand{\pcv}{(\pi\spcheck,V\spcheck)}
\newcommand{\vform}{\langle\mspace{7mu},\mspace{6mu}\rangle}
\newcommand{\hform}{ (\mspace{7mu},\mspace{6mu})}
\newcommand{\jform}{ (\mspace{7mu},\mspace{6mu})_N}
\newcommand{\sform}{ [\mspace{7mu},\mspace{6mu}]}
\newcommand{\ta}{\theta}
\newcommand{\Ta}{\Theta}
\newcommand{\De}{\Delta}
\newcommand{\dep}{\delta_P}
\newcommand{\depo}{\delta_{\overline{P}}}
\newcommand{\depp}{\delta_P^{1/2}}
\newcommand{\deppo}{\delta_P^{- 1/2}}
\newcommand{\depm}{\delta_P^{- 1/2}}
\newcommand{\depom}{\delta_{\overline{P}}^{- 1/2}}
\newcommand{\pit}{\pi^\theta}
\newcommand{\tat}{\tau^\theta}
\newcommand{\tg}{{}^{\theta}g}
\newcommand{\rpos}{\mathbb{R}^\times_{\texttt{pos}}}
\newcommand{\indp}{\iota_P^G}
\newcommand{\indpb}{\iota_{\overline{P}}^G}
\newcommand{\ompi}{\omega_\pi}
\newcommand{\fas}{\mathfrak{a}^\ast}
\newcommand{\fals}{\mathfrak{a}_L^\ast}
\newcommand{\fams}{\mathfrak{a}_M^\ast}
\newcommand{\fan}{\mathfrak{a}_P^{\ast, -}}
\title{Signs, involutions and Jacquet modules} 
\author{Alan Roche}   
\address{Department of Mathematics \\
         University of Oklahoma\\
         Norman, OK 73019\\
          U.S.A.}
\email{aroche@math.ou.edu}
\author{Steven Spallone}   
\address{School of Mathematics\\
                 TIFR\\
                 Homi Bhabha Rd.\\
                 Mumbai 400 005\\
                 India}
\email{sspallone@gmail.com}
\date{\today}                                        
\begin{document}

\thanks{2000 {\em Mathematics Subject Classification.} 22E50, 20G05.}
\thanks{}

\keywords{}

\begin{abstract}
Let $G$ be a connected reductive $p$-adic group and let $\theta$ be an automorphism of $G$ of order at most two. 
Suppose $\pi$ is an irreducible smooth representation of $G$ that is taken to its dual by $\theta$. 
The space $V$ of $\pi$ then carries a non-zero bilinear form $(\mspace{7mu},\mspace{6mu})$, unique up to scaling, 
with the invariance property $(\pi(g)v, \pi({}^{\ta}g)w) = (v,w)$, for $g \in G$ and $v, w \in V$. The form is 
easily seen to be symmetric or skew-symmetric and we  set  
$\varepsilon_\theta(\pi)  = \pm1$ accordingly.  We use Cassleman's pairing (in commonly observed circumstances) to express 
$\varepsilon_\theta(\pi)$ in terms of  certain Jacquet modules of $\pi$ and thus, via the Langlands classification,  reduce the problem of 
determining the sign to the case of of tempered representations. For the transpose-inverse involution of the general linear group, 
we show that the associated signs are always one.
 \end{abstract}

\maketitle


\section*{Introduction}

This is the first in a series of papers devoted to what we call ordinary and twisted signs for reductive $p$-adic groups. 
These signs make sense in a broad setting. To define them, let $G$ be a topological group and let   
$\ta$ be a continuous automorphism of $G$ of order at most two. 
Suppose $\pi$ is an irreducible (complex) representation of $G$ such that $\pit \simeq \pic$ where 
$\pic$ denotes the dual or contragredient of $\pi$ and $\pit$ the $\ta$-twist of $\pi$ given by $\pit(g) = \pi({}^{\ta}g)$, for $g \in G$. 
The underlying space $V$ of $\pi$ then admits a non-degenerate bilinear form such that 
\[
      (\pi(g)v_1, \pit(g)v_2) = (v_1, v_2),  \quad \,\,\, g \in G, \, v_1, v_2 \in V.
 \]
In the presence of Schur's Lemma, the form is unique up to scalars and so is easily seen to be symmetric or skew-symmetric.  
We set 
\[
   \spit = 
          \begin{cases}
   \mspace{13mu} 1 \quad &\text{if the form is symmetric},  \\ 
            -1 \quad &\text{if the form is skew-symmetric}.
     \end{cases}
\]

If $\ta$ has order two, we call $\spit$ the $\ta$-twisted sign of $\pi$.  
If $\ta=1$ (so that $\pi$ is self-dual), we simply write $\spi$ in place of $\varepsilon_1(\pi)$ 
and refer to it as the ordinary sign of $\pi$.

Ordinary and twisted signs have been extensively studied for finite groups of Lie type, largely by examining a
family (or closely related families) of such groups at a time (general linear, special linear, symplectic  etc.) -- see, for example, \cite{Gow, Gow1, P, Vin1}.
There is a small but growing literature on these signs in our setting of smooth representations of 
reductive $p$-adic groups (\cite{P1, Vin, PR, LV}). 

To be more precise, let $F$ be a non-Archimedean local field. Our concern is with involutions and 
associated signs for the groups  of $F$-points of certain connected reductive algebraic groups over $F$.
Many  groups $G$ in this class admit an involution $\ta$ such that $\pit \simeq \pic$, for {\it all} irreducible smooth representations $\pi$
of $G$.  For example, if $G = {\rm GL}_n(F)$ then it is an old result of Gelfand and Kazhdan that the involution 
$g \overset{\ta}{\longmapsto}  {}^{\top}g^{-1}$ (where $\top$ denotes {\it transpose}) has this property \cite{GK}.
We list other examples in \S\ref{examples}.

The two classes of signs, ordinary and twisted, are closely related. For instance, suppose $\tau$ is an irreducible discrete series 
representation of a proper Levi subgroup $M$ of $G$ and suppose that the normalized induced representation $\pi = \indp (\tau)$ is 
irreducible and self-dual (where $P$ is a parabolic subgroup of $G$ with Levi component $M$).
 If $\tau$ is itself self-dual, then it is straightforward to see that $\pi$ and $\tau$ have the same sign. 
In most cases, however, $\tau$ is not self-dual. What holds then is that there is an involution $\ta$ on $M$ such that 
$\tau^\ta \simeq \tc$. Under commonly observed circumstances, our methods lead to a simple explicit relation 
between the signs $\spi$ and $\sptn$. In this way, and in similar but more elaborate ways, the study of ordinary signs on $G$ 
is entwined with the study of twisted signs on Levi subgroups of $G$. 

For much of the paper, we work in a quite general setting. We take an involution $\ta$ on $G$
and a  smooth irreducible representation $\pi$ of $G$ such that $\pit \simeq \pic$.  
We use Cassleman's pairing \cite{Cass} to express the sign $\spit$ in terms of suitable Jacquet modules of $\pi$.  
For this, we need to assume that $\ta$ satisfies certain technical properties known to hold in a broad range of cases
(see \S \ref{setup}). 
In particular, we assume that $\ta$ preserves a Levi component $M$ of a parabolic subgroup $P$ of $G$, more strongly that $\ta$
takes $P$ to its $M$-opposite. 
Writing $N$ for the unipotent radical of $P$ and $\overline{N}$ for the unipotent radical of the $M$-opposite of $P$,  
Casselman's pairing gives a non-degenerate $M$-invariant pairing between the Jacquet modules $\pi_N$ and $(\pic)_{\overline{N}}$.
Using its defining properties, it is straightforward to import the pairing into our setting to obtain the following
descent statement, our principal technical tool. 

\begin{Prop}
With assumptions and notation as above, $(\pi_N)^\ta \simeq (\pi_N)\spcheck$. Moreover, the Jacquet module $\pi_N$
naturally carries a sign $\spin$ (even though it is in general reducible) and $\spit = \spin$.
In particular, if  $\pi_N$ admits an irreducible subrepresentation $\tau$ that occurs with multiplicity one
(as a constituent of $\pi_N$) and is such that $\tau^\ta \simeq \tau\spcheck$, then $\spit = \sptn$. 
\end{Prop}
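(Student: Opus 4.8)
The plan is to derive the final assertion from the first two parts of the Proposition, which we take as given: they furnish a non-degenerate bilinear form $\mform$ on the space of $\pi_N$ satisfying the invariance $[\pi_N(m)v,(\pi_N)^\ta(m)w]_N=[v,w]_N$ for all $m\in M$, and they tell us that $\mform$ is symmetric or skew-symmetric with $\spin=\spit$. Let $U\subseteq\pi_N$ be the space of the given irreducible subrepresentation $\tau$. It suffices to show that the restriction of $\mform$ to $U$ is non-degenerate: that restriction is then a non-degenerate $\ta$-invariant bilinear form on $\tau$, symmetric if $\mform$ is symmetric and skew-symmetric if $\mform$ is skew-symmetric; since $\tat\simeq\tc$ and $\tau$ is irreducible, Schur's Lemma forces this form to be a scalar multiple of the one defining $\sptn$, whence $\sptn=\spin=\spit$.

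To establish non-degeneracy of $\mform|_U$, consider the perpendicular $U^\perp=\{\,w\in\pi_N:[v,w]_N=0\ \text{for all}\ v\in U\,\}$. It is an $M$-subrepresentation of $\pi_N$: if $w\in U^\perp$ and $m\in M$, then for every $v\in U$ we have $[v,(\pi_N)^\ta(m)w]_N=[\pi_N(m)^{-1}v,w]_N=0$, since $\pi_N(m)^{-1}v\in U$; thus $(\pi_N)^\ta(m)w\in U^\perp$, so $U^\perp$ is stable under $(\pi_N)^\ta$, equivalently under $\pi_N$. Because $\mform$ is symmetric or skew-symmetric, the radical of $\mform|_U$ equals $U\cap U^\perp$, an $M$-subrepresentation of the irreducible module $U$. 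Hence $U\cap U^\perp$ is either $0$, and we are done, or all of $U$, i.e.\ $\mform$ vanishes identically on $U$. It remains to rule out the latter.

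Suppose $\mform$ vanishes on $U$. The form realizes the isomorphism of the first part via the $M$-map $\Phi\colon\pi_N\xrightarrow{\ \sim\ }((\pi_N)^\ta)\spcheck$, $v\mapsto[v,\cdot\,]_N$. Vanishing of $\mform$ on $U$ says that $\Phi(U)$ is contained in the annihilator of $U$ in $((\pi_N)^\ta)\spcheck$; as all modules here are admissible (indeed of finite length), that annihilator is canonically $((\pi_N/U)^\ta)\spcheck$. So $\Phi$ restricts to an embedding $\tau\simeq U\hookrightarrow((\pi_N/U)^\ta)\spcheck$. The functor $W\mapsto(W^\ta)\spcheck$ is exact on admissible modules, so every irreducible constituent of $((\pi_N/U)^\ta)\spcheck$ has the form $(\sigma^\ta)\spcheck$ with $\sigma$ an irreducible constituent of $\pi_N/U$; in particular $\tau\simeq(\sigma^\ta)\spcheck$ for such a $\sigma$. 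Passing to contragredients gives $\sigma^\ta\simeq\tc$, and then the hypothesis $\tat\simeq\tc$ yields $\sigma^\ta\simeq\tat$, hence $\sigma\simeq\tau$. Thus $\tau$ occurs as a constituent of $\pi_N/U$; together with the copy $U\subseteq\pi_N$ this means $\tau$ occurs in $\pi_N$ with multiplicity at least two, contradicting the multiplicity-one hypothesis. Therefore $\mform|_U$ is non-degenerate, completing the proof.

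The only substantial step is the exclusion of the isotropic case in the last paragraph, and its force comes entirely from pairing the multiplicity-one hypothesis with the identity $\tat\simeq\tc$: it is the latter that promotes the a priori constituent $(\sigma^\ta)\spcheck$ of $\pi_N/U$ back to $\tau$ itself and so produces the forbidden second copy. Everything else is formal manipulation of the invariance of Casselman's pairing recorded in the first two parts of the Proposition.
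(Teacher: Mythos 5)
Your argument establishes only the final assertion of the Proposition, explicitly taking the first two parts as given. That scope restriction matters: the identification $(\pi_N)^\ta \simeq (\pi_N)\spcheck$ and, more substantively, the claim that the resulting form on $V_N$ inherits the symmetry type of the original form and hence defines a sign with $\spin = \spit$, are the heart of the Proposition. The paper establishes them via Casselman's pairing together with a genuine computation (the uniqueness of the pairing, the stability of $A_M(\epsilon)$ under $a \mapsto {}^\ta a^{-1}$, and the identity $\dep({}^\ta a^{-1}) = \dep(a)$) that your proposal does not touch.

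For the final assertion itself, your proof is correct and is essentially the same as the paper's Corollary: both reduce to showing that the pairing does not vanish identically on the $\tau$-isotypic subspace $U$, by combining the multiplicity-one hypothesis with $\tau^\ta \simeq \tc$. The paper does this in compressed form by observing that the evaluation map $(\pi_N)^\ta \to \tc$ given by restricting the second argument to $U$ is surjective, and then noting (implicitly) that if it killed $U$ it would force a second constituent isomorphic to $\tau$ in $\pi_N/U$. Your version via the perpendicular $U^\perp$, the radical $U \cap U^\perp$, and the annihilator inside $((\pi_N)^\ta)\spcheck$ is the dual formulation of the same argument, spelled out in more detail; it buys nothing new but is perfectly sound. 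So: no gap in what you proved, but you proved only one of the three assertions.
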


The hypotheses of the proposition hold in the setting of the Langlands classification. This allows us (under our assumptions) to 
reduce the problem of determining $\ta$-twisted signs to the case of tempered representations. 

The  proposition has implications for ordinary signs. For these, one takes $\ta$ to be an inner automorphism with respect to a suitable element $n$ 
such that $n^2$ is central. Then $\pit \simeq \pic$ simply says that $\pi$ is self-dual and it is easy to see that the ordinary and twisted signs of $\pi$ 
are related by $\spi = \spit \,\ompi(n^2)$ where $\ompi$ denotes the central character of $\pi$ (a special case of (\ref{sign-thetap}) below).
Our descent result then frequently reduces the problem of determining $\spi$ to the case of discrete series representations where often 
powerful other techniques (in particular, global methods as in \cite{PR})
can be brought to bear. We will explore this application of our method elsewhere in the case of certain classical groups. 

In the final sections of the paper, we take $G = {\rm GL}_n(F)$ and set ${}^{\ta}g =  {}^{\top}g^{-1}$, for $g \in G$.  We give two proofs that 
the corresponding twisted signs are always one. 

\begin{Thm}
Suppose $\pi$ is  an irreducible smooth representation of  ${\rm GL}_n(F)$. Then $\spit = 1$.
\end{Thm}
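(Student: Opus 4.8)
The plan is to combine the descent Proposition with the Langlands classification to strip off parabolic induction, reducing the theorem to the tempered case, and then to settle that case by an explicit computation on the Whittaker model.

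Let $G = \mathrm{GL}_n(F)$ and $\ta(g) = {}^{\top}g^{-1}$. First I would check that $\ta$ meets the hypotheses of the Proposition for every standard parabolic $P$ with Levi $M = \mathrm{GL}_{n_1}(F) \times \cdots \times \mathrm{GL}_{n_r}(F)$: transpose-inverse carries the block upper triangular $P$ to the block lower triangular $M$-opposite $\overline{P}$, and restricts on $M$ to the transpose-inverse involution on each block. As the text records, the Proposition then reduces the determination of $\ta$-twisted signs to tempered representations, and for $\mathrm{GL}_n(F)$ this takes a concrete form. Writing an irreducible $\pi$, via the Langlands classification, as the unique irreducible quotient of $\indp(\tau)$ with $\tau = \tau_1 \otimes \cdots \otimes \tau_r$, each $\tau_i$ an essentially tempered representation of $\mathrm{GL}_{n_i}(F)$ with strictly decreasing central exponents, the Gelfand--Kazhdan theorem applied to each factor gives $\tau^\ta \simeq \tau\spcheck$, and $\tau$ occurs in the Jacquet module $\pi_N$ with multiplicity one. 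Since the invariant bilinear form on a tensor product of representations is the tensor product of the invariant forms — so the sign of a tensor product is the product of the signs — and since twisting a representation of $\mathrm{GL}_m(F)$ by $|\det|^s$ leaves its $\ta$-twisted sign unchanged, the Proposition yields $\spt(\pi) = \prod_i \spt(\tau_i^{\circ})$, where $\tau_i^{\circ}$ is the unitary tempered representation underlying $\tau_i$. It therefore suffices to prove $\spt(\sigma) = 1$ for every irreducible tempered representation $\sigma$ of $\mathrm{GL}_m(F)$.

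Every such $\sigma$ is generic, so I would realize it on its Whittaker model $\caw(\sigma, \psi)$. The Gelfand--Kazhdan map $W \mapsto \widehat{W}$, $\widehat{W}(g) = W(w_m\, {}^{\top}g^{-1})$ with $w_m$ the antidiagonal permutation matrix in $\mathrm{GL}_m(F)$, is an isomorphism of $\sigma$ onto $(\sigma\spcheck)^\ta$. Composing it with Bernstein's $G$-invariant pairing $\caw(\sigma, \psi) \times \caw(\sigma\spcheck, \psi^{-1}) \to \cx$, given by the integral over $N_{m-1} \backslash \mathrm{GL}_{m-1}(F)$ of $W_1(\mathrm{diag}(h,1))\, W_2(\mathrm{diag}(h,1))$ (convergent because $\sigma$ is tempered and unitary), produces a $\ta$-invariant bilinear form $(W_1, W_2) := \langle W_1, \widehat{W_2}\rangle$ on $\caw(\sigma, \psi)$; by Schur's lemma it spans the line of such forms, so $\spt(\sigma)$ is exactly its symmetry sign. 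Now $(W_1, W_2)$ and $(W_2, W_1)$ are both $\ta$-invariant forms, hence proportional, say $(W_2, W_1) = c\,(W_1, W_2)$; then $c^2 = 1$ and $c = \spt(\sigma)$, so it is enough to exhibit one $W$ with $(W, W) \neq 0$, i.e. with $\int_{N_{m-1}\backslash\mathrm{GL}_{m-1}(F)} W(\mathrm{diag}(h,1))\, W(w_m\, \mathrm{diag}({}^{\top}h^{-1}, 1))\, dh \neq 0$.

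This nonvanishing — equivalently, the symmetry of the pairing just constructed — is the main obstacle. Unwound, it is the statement that the Bessel function of $\sigma$ is invariant under $g \mapsto w_m\, {}^{\top}g^{-1}\, w_m$ (equivalently, a statement about the shape of the $\mathrm{GL}_m \times \mathrm{GL}_m$ Rankin--Selberg local functional equation at the centre), and proving it calls for a careful change of variables $h \mapsto w_{m-1}\, {}^{\top}h^{-1}\, w_{m-1}$ in the defining integral, using the identity $w_m\, \mathrm{diag}(h,1)\, w_m = \mathrm{diag}(1, w_{m-1}h w_{m-1})$ and the equivariance of $W \mapsto \widehat{W}$, with attention to the quotient measure and — for the nonvanishing — to positivity of a suitably chosen test vector in the Kirillov model. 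For a second, independent proof I would reduce one step further, to supercuspidal $\sigma$, via the Zelevinsky description of discrete series as generalized Steinberg representations and the multiplicity-one Jacquet modules of the latter (with an induction on the number of segments), and then settle the supercuspidal case by a global argument in the spirit of \cite{PR}: realize $\sigma$ as a local component of a globally $\ta$-self-dual cuspidal automorphic representation of $\mathrm{GL}_m(\mathbb{A})$ unramified outside a controlled set, read the local sign off from the symmetry of the resulting global period, and note that the remaining local signs are visibly $1$; the obstacle there is arranging the global lift with the required self-duality.
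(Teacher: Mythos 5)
Your reduction to the tempered (hence generic) case follows the paper's own route: the descent Proposition combined with the Langlands classification, plus an appeal to Zelevinsky to know tempered implies generic. That part is sound, though the paper goes straight from ``tempered'' to the generic case without needing the multiplicativity and twist-invariance of the sign that you invoke (these are true but unnecessary here, since the Proposition already hands you $\spit = \sptn$ with $\tau$ an irreducible tempered representation of a product of smaller $\mathrm{GL}$'s, and you can simply replace $G$ by that product).

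The genuine gap is in the tempered case. You build the correct candidate form $(W_1,W_2) = \langle W_1, \widehat{W_2}\rangle$ on the Whittaker model and correctly reduce to exhibiting $W$ with $(W,W)\neq 0$, but you explicitly leave this unproved, and it is not a routine verification: it is, as you say yourself, essentially the symmetry of the Bessel function under $g \mapsto w_m\,{}^{\top}g^{-1}w_m$, or equivalently a statement about the local Rankin--Selberg pairing at the center of symmetry. The change-of-variables you sketch does not close the argument as written --- after substituting $h \mapsto w_{m-1}{}^{\top}h^{-1}w_{m-1}$ one lands on $\mathrm{diag}(1,k)$ rather than $\mathrm{diag}(k,1)$, and reconciling these requires a further nontrivial identity; the appeal to ``positivity in the Kirillov model'' for nonvanishing is also not carried out. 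Your alternative global route (reduce to supercuspidal, globalize, read off the sign from a period) has the same character: plausible, but the crucial lifting step is only named as an obstacle. The paper sidesteps all of this. Its first proof uses the Jacquet--Piatetski-Shapiro--Shalika conductor theorem to produce a compact open $K=K_c$ with $\dim V^K = 1$, then twists $\theta$ by $\mathrm{Int}(\mathrm{diag}(1,\dots,1,\varpi^c))$ so the modified involution fixes $K$; the invariant form restricts nondegenerately to the line $V^K$, where any bilinear form is symmetric, giving the sign $1$ with no analysis. Its second proof embeds $G$ as the Siegel Levi of split $\mathrm{SO}_{2n+1}$, induces up a suitable twist of $\pi$ to an irreducible self-dual generic representation (using Sauvageot for irreducibility and Rodier for genericity), applies the descent Proposition again in the reverse direction, and quotes Prasad's theorem that generic self-dual representations of split groups have ordinary sign $+1$. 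Both are complete and considerably shorter than what your plan would require.
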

\noindent
This has been proved in many instances by Vinroot \cite{Vin}. Our arguments complete his in that we reduce 
to the case of generic representations (i.e., representations admitting a Whittaker model), a case covered by his methods. 
We choose a somewhat different route, however, and each of our proofs is independent of \cite{Vin}.  
The result is crucial in our approach to the study of ordinary signs for certain classical groups. 
The exact analogue of the theorem also holds in the case of general linear groups over finite fields and the methods of this paper can be adapted
to yield a proof. The result in this case, however, is well-known. It was first proved under a different formulation by Gow 
in odd characteristic (\cite{Gow}~Theorem~4)  and then by MacDonald in arbitrary characteristic (\cite{MacD}~pages~289-90). 
The paper \cite{BG} has a pleasant discussion of the relation between Gow's (and MacDonald's) formulation and twisted signs.
The result is in fact equivalent to the existence and uniqueness of Klyachko models (as follows, for example, from 
 \cite{HZ} which gives yet another proof).

We are grateful to Ryan Vinroot for helpful correspondence. 
  
\section{Preliminaries on signs and involutions}  \label{review}
Let $G$ be a separable locally profinite group and let $\pv$ be a smooth irreducible representation of $G$. 
We write $\pcv$ for the smooth dual or contragredient of $\pv$ and $\vform$ for the canonical non-degenerate
$G$-invariant pairing on $V \times \cv$ (given by evaluation). Let $\theta$ be a continuous automorphism of $G$ of order at most two. 
Define the $\ta$-twist $(\pit, V)$ of $\pv$ by  
\[
\pit(g)\,v = \pi(\tg)\,v,  \quad \,\,\, \, g \in G,  v \in V. 
\]
Suppose that $\pit \simeq \pic$.  This implies that $\pi$ is equivalent to its double dual $\pi\spcheck {}\spcheck$ via the canonical $G$-map 
\begin{equation*}    
   v \mapsto \langle v,  \mspace{6mu} \rangle:(\pi, V) \to  (\pi\spcheck {}\spcheck, V\spcheck {}\spcheck),    \tag{$\ast$}
\end{equation*}
i.e., $\pi$ is admissible. Indeed, $\pi \simeq  (\pic)^\ta$ and so, using $(\pic)^\ta = (\pit)\spcheck$, 
\[
\pi \simeq (\pit)\spcheck \simeq   \pi\spcheck {}\spcheck. 
\]
In particular, $\pi\spcheck {}\spcheck$ is irreducible, and thus the non-zero map ($\ast$) is  an isomorphism.
  
\subsection{} \label{inv-form}
We recall how the sign $\spit$ is attached to $\pi$. 
The same formalism applies in any setting in which Schur's Lemma is available.
 
Let $s:(\pit, V) \to \pcv$ be an isomorphism. Since $G$ is separable, Schur's Lemma applies and 
$s$ is unique up to scaling.  We set 
\begin{equation*}  
                         (v, w) = \langle v, s(w) \rangle, \quad \,\, \forall \, v, w \in V.
\end{equation*}
Then $\hform$ is a non-degenerate bilinear form on $V \times V$. 
It is clearly $G$-invariant in the sense that 
\begin{equation} \label{theta-invariance}
                  (\pi(g)v,\pit(g) w) = (v, w) , \quad \,\, \forall \, g \in G, \,\, \forall \, v, w \in V.
\end{equation}
Moreover, any non-degenerate bilinear form on $V \times V$ with this invariance property arises in this way from a non-zero element
of $\text{Hom}_G(\pit, \pic)$.  Thus any such form is unique up to multiplication by non-zero scalars. 
 
The dual or adjoint $s\spcheck: V \to \cv$ of $s:V \to \cv$ is  characterized by
\begin{equation}  \label{symmetry}
            \langle  v, s\spcheck(w) \rangle = \langle w, s(v) \rangle, \quad \,\, \forall \, v, w \in V.
\end{equation} 
By a straightforward calculation, it again intertwines $\pit$ and $\pic$, and so, by Schur's Lemma, 
$s\spcheck = c \,s$, for some nonzero scalar $c$. Dualizing once more, 
\begin{align*} 
        s  &= (s\spcheck) \spcheck  \\ 
             &= (c \, s)\spcheck  \\
              &= c^2 s, 
 \end{align*}
whence $c^2 = 1$ and $c = \pm 1$.  We set $c = \spit$. It clearly depends only on the equivalence class of $\pi$. 
Rewriting (\ref{symmetry}) in terms of  $\hform$, we have
\[
              (v, w)  = \spit\, (w,v).
 \]
In sum, the form $\hform$ must be symmetric or skew-symmetric and the sign $\spit$ records which case occurs.

\subsection{}  \label{inner-auts}
At a few places below, it is more  convenient to work with a variant $\ta'$ of the initial $\ta$. In each case, 
\[
     \theta' = \text{Int}\,(h) \circ \ta, 
\]
for some $h \in G$ where  $\text{Int}\,(h)$ denotes the inner automorphism $g \mapsto hgh^{-1}$ of $G$. 
We clearly also have $\pi^{\ta'} \simeq \pic$. 
For later use, we record the following simple relation between the signs $\spit$ and $\spip$.
Writing $\ompi$ for the central character of $\pi$, the element ${}^{\ta}h \,h$ is central in  $G$ and 
\begin{equation}  \label{sign-thetap}
 \spip =  \spit \, \ompi({}^{\ta}h \, h). 
\end{equation}

To check this, note first that $\ta' \circ \ta' = \text{Int}\, (h \,{}^{\ta}h)$. Since $\ta'$ has order at most two, $h \, {}^{\ta}h$ is in the center of $G$
and thus  $h \, {}^{\ta}h  =  {}^{\ta}h\, h$. 
Let $\hform$ be any nonzero form on $V \times V$ with the invariance property (\ref{theta-invariance}) and 
define a bilinear form $\sform$ on $V \times  V$ by 
\[
     [v_1, v_2]  =  (v_1, \pi(h^{-1})\,v_2), \,\,\, \quad v_1, v_2 \in V. 
\]
It is immediate that  
\[  
    [\pi(g) v_1, \pi^{\theta'}(g) v_2]  =  [v_1, v_2],  
  \quad  \,\,g \in G, \,\,\, \,\, v_1, v_2 \in V.
\]
The relation (\ref{sign-thetap}) now follows from the computation    
\begin{align*} 
      [v_1, v_2]  &=  \spit \, (\pi(h^{-1})\,v_2, v_1) \\
                         &=  \spit \, (v_2, \pi({}^{\ta}h) v_1) \\
                         &=   \spit \, \ompi({}^{\ta}h \, h) \,( v_2, \pi(h^{-1}) v_1) \\
                         &=    \spit \, \ompi({}^{\ta}h\, h)\,[v_2, v_1],   \quad \,\,\, \,\, v_1, v_2 \in V.     
\end{align*}

\section{Involutions and duals: examples}  \label{examples}

For the remainder of the paper,  $F$ is a non-archimedean local field and the 
group $G$ will always  be the $F$-points of  some reductive (usually connected) $F$-group. 
To provide examples of phenomena that we discuss in a general language in later sections, we now list  
several such groups $G$ and associated involutary automorphisms $\theta$ such that 
$\pit \simeq \pic$, for {\it all} irreducible smooth representations $\pi$ of $G$. 

\subsection{} \label{GK}
We first restate Gelfand-Kazhdan's result (\cite{GK} Theorem 2).
Let  $n$ be a positive integer and set $G = {\rm GL}_n(F)$. For $g \in G$, we put ${}^{\ta}g = {}^{\top}g^{-1}$ (where $\top$ denotes
\emph{transpose}). 

\begin{Thm}
Let $\pi$ be a smooth irreducible representation of $G$.  Then    
$\pit \simeq \pic$.  
\end{Thm} 

\noindent
We show in \S\ref{mainapp} and again in \S\ref{mainapp2} that the associated twisted signs are invariably one. 

\subsection{}  \label{MSR}
Next let $D$ be a quaternion division algebra over $F$.  
Let $n$ be a positive integer and set $G = {\rm GL}_n(D)$. 
Write $a \mapsto \overline{a}$ for the canonical involution or quaternion conjugation on $D$ 
(see, for example, \cite{Knus} p. 26). 
If $g = (a_{ij}) \in G$, we put  $\overline{g} = (\overline{a}_{ij})$ and set 
${}^{\ta}g = {}^{\top}\overline{g}^{-1}$. The map $\ta$ is an involution on $G$.  
The exact analogue in this setting of the Gelfand-Kazhdan theorem was proved by Mui\'{c} and Savin \cite{MS}
(in characteristic zero) and in a more elementary fashion by Raghuram  \cite{R} (in arbitrary characteristic).

\begin{Thm}
Let $\pi$ be a smooth irreducible representation of $G$.  Then    
$\pit \simeq \pic$.  
\end{Thm} 

\noindent
In this case, the associated twisted sign is controlled by the central character. More precisely, 
\[
    \spit = \ompi(-1), 
    \]
for any irreducible smooth representation $\pi$ of $G$ where $\ompi$ denotes the central character of $\pi$.
The proof is considerably more involved than the case of ${\rm GL}_n(F)$ \cite{RS}.     

\begin{Rmk}
For completeness, we record a converse observation. 
Let $\mathcal{D}$ be a finite-dimensional central division algebra over $F$ and set $G = {\rm GL}_n(\mathcal{D})$.  Suppose there is an 
involution $\ta$ on $G$ such that $\pit \simeq \pic$, for all irreducible smooth representations $\pi$ of $G$.  
By exploiting the structure of the automorphism group of $G$ \cite{D},  it is straightforward to show  the following (see \cite{RS} for details): 
 \begin{enumerate}[(a)]
\item 
$\mathcal{D} = F$ or $\mathcal{D} = D$;

\item
$\ta$ is unique up to composition with an inner automorphism. 

\end{enumerate} 
In other words, the only twisted signs that arise for the full smooth dual of the unit group of a central simple algebra over a non-Archimedean local 
field are those described above.
\end{Rmk}

\subsection{}    \label{MVW}
We recall a broad class of examples from \cite{MVW}. Suppose $F$ does not have characteristic two and let
$F'$ be $F$ or a quadratic extension of $F$. For $\epsilon = \pm1$, let  $\vform$ be a non-degenerate $\epsilon$-hermitian form on 
a finite-dimensional $F'$-vector space $V$.  Thus $\vform$ is orthogonal or symplectic in the case $F'=F$. 
We write $G$ for the associated isometry group: 
\[
  G = \{ g \in \text{Aut}_{F'}(V): \langle g u, g v \rangle = \langle u, v \rangle, \,\,\,\forall \, u, v \in V \}.  
\]
There is an element $\ta \in \text{Aut}_{F'}(V)$ with $\ta^2 = 1_V$, the identity map on $V$, such that 
\[
          \langle \ta u, \ta v \rangle  = \langle v, u \rangle, \quad \forall \, u, v \in V.
          \]
We also write $\ta$ for the induced involution on $G$ given by $g \mapsto \ta g \ta^{-1}$ ($g \in G$). 
We restate \cite{MVW} Chap.~4~II.1.

\begin{Thm} 
The involution $\ta$ satisfies $\pit \simeq \pic$ for all irreducible smooth representations $\pi$ of $G$.
\end{Thm}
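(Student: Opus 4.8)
The plan is to reduce the assertion $\pit \simeq \pic$ to an identity of Harish-Chandra characters and then to a purely group-theoretic statement about semisimple conjugacy classes in $G$. Since $G$ is the group of $F$-points of a reductive $F$-group, every smooth irreducible representation of $G$ is admissible; hence $\pi$, $\pic$ and $\pit$ are admissible, and by linear independence of the characters of pairwise inequivalent irreducible admissible representations it suffices to prove the equality of distributions $\Theta_{\pit} = \Theta_{\pic}$. By Harish-Chandra's theory of characters, each of these is represented by a locally integrable function that is locally constant on the set $G^{\mathrm{reg}}$ of regular semisimple elements and is determined by its restriction to $G^{\mathrm{reg}}$. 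Unwinding the definitions, $\Theta_{\pit}(g) = \Theta_\pi(\tg) = \Theta_\pi(\ta g\ta^{-1})$ and $\Theta_{\pic}(g) = \Theta_\pi(g^{-1})$, and $\Theta_\pi$ is a conjugation-invariant function on $G$. So it is enough to prove the following:
\[
\text{for every regular semisimple } g \in G, \quad \ta g\ta^{-1} \ \text{is } G\text{-conjugate to } g^{-1}.
\]

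To establish this I would exploit the defining property $\langle \ta u, \ta v\rangle = \langle v, u\rangle$. Call $\delta \in \mathrm{Aut}_{F'}(V)$ \emph{form-reversing} if $\langle \delta u, \delta v\rangle = \langle v, u\rangle$ for all $u, v \in V$. A direct computation shows that the composite of two form-reversing maps lies in $G$; since $\ta$ is form-reversing with $\ta^{-1} = \ta$, it follows that if $g$ is conjugate to $g^{-1}$ inside $\mathrm{Aut}_{F'}(V)$ by a form-reversing $\delta$, then $h := \delta\ta^{-1} \in G$ and $h(\ta g\ta^{-1})h^{-1} = \delta g\delta^{-1} = g^{-1}$, which is exactly the displayed claim. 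Hence it suffices to produce, for each semisimple $g \in G$, a form-reversing $\delta$ with $\delta g\delta^{-1} = g^{-1}$. For this one decomposes $V$ as a module over the semisimple commutative $F'$-algebra $F'[g]$. The adjoint anti-involution $a \mapsto a^\ast$ of $\mathrm{End}_{F'}(V)$, characterized by $\langle au, v\rangle = \langle u, a^\ast v\rangle$, restricts to an involution of $F'[g]$ with $g^\ast = g^{-1}$; accordingly the primary components of $V$ group into non-degenerate self-dual summands $V_i$ (on which $F'[g]$ acts through a field $E_i$ with involution and $\vform$ restricts non-degenerately) and hyperbolic summands $V_j' \oplus V_j''$ with $V_j'$, $V_j''$ dual totally isotropic and interchanged on passing from $g$ to $g^{-1}$. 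On a hyperbolic summand a form-reversing conjugator is written down by hand. On a self-dual summand $g$ acts as multiplication by some $\alpha \in E_i$ with $\alpha^\ast = \alpha^{-1}$, and producing $\delta|_{V_i}$ reduces to exhibiting a suitable semilinear isometry of $(V_i, \vform)$ carrying multiplication by $\alpha$ to multiplication by $\alpha^{-1}$; this is supplied by Witt's extension theorem after matching the relevant $\epsilon$-hermitian (resp. symmetric or alternating) forms, with the requisite sign bookkeeping. Assembling the pieces over all summands yields the global $\delta$.

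I expect the self-dual-summand step to be the main obstacle: it comes down to a comparison of $\epsilon$-hermitian (or symmetric/alternating) forms over the residue fields $E_i$, and the split, quadratic \'etale, and — for $\mathrm{GL}_n(D)$-type isometry groups — quaternion-algebra cases of $F'$ must be handled uniformly (including the verification, standard but essential, that $g$ and $g^{-1}$ are already conjugate in $\mathrm{Aut}_{F'}(V)$, which is what the hyperbolic part reflects). The reduction to characters, by contrast, is routine given admissibility and the character theory. One could instead invoke a Gelfand--Kazhdan criterion — every conjugation-invariant distribution on $G$ is $\ta$-invariant — but verifying it runs into the same orbit analysis and, moreover, forces one to control the non-regular locus, so the character route through $G^{\mathrm{reg}}$ is the more economical one. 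Finally, when $\vform$ is symmetric (the orthogonal case) $\ta$ is itself form-preserving and hence inner, so there $\pit \simeq \pi$ and the statement reduces to the self-duality $\pi \simeq \pic$.
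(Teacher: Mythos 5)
The paper gives no proof of this statement: it simply restates \cite{MVW} Chapter~4, II.1 as a theorem, so there is no ``paper's proof'' for a direct comparison. Your sketch is, in outline, exactly the proof given in \cite{MVW}: reduce the equivalence $\pit \simeq \pic$ to a character identity, then reduce that to the assertion that $\ta g \ta^{-1}$ and $g^{-1}$ lie in the same $G$-conjugacy class, and finally establish this by decomposing $V$ as a module over the \'etale algebra $F'[g]$ equipped with the adjoint involution and analyzing self-dual versus hyperbolic pieces. Your introductory reduction via form-reversing elements is clean and correct (the check that $\delta\ta \in G$ and conjugates $\ta g\ta^{-1}$ to $g^{-1}$ is right). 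One genuine variation from \cite{MVW}: you invoke Harish-Chandra's regularity theorem and linear independence of characters to pass through $G^{\mathrm{reg}}$ only, whereas \cite{MVW} proves the orbit statement for all $g\in G$ (their Proposition~I.2) and argues at the level of invariant distributions; your route trades a stronger analytic input (regularity) for a weaker geometric one (regular semisimple only), and is perfectly valid for the present purpose since every irreducible smooth representation here is admissible.

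The place where your writeup falls short of a proof is the one you flagged yourself: the ``self-dual summand'' step. It is not enough to quote Witt's extension theorem. What has to be shown is that, for each primary component $V_i$ with $F'[g]$ acting through a field $E_i$ stable under the adjoint involution, there exists an $E_i$-\emph{semilinear} (with respect to the nontrivial involution $\ast$) map $\delta_i$ of $V_i$ that is form-reversing for the restriction of $\vform$. Rewriting $\vform|_{V_i}$ as an $E_i$-valued $\ast$-hermitian (or symmetric/alternating, in the split cases) form $h_i$, the existence of such a $\delta_i$ is equivalent to an isomorphism between $h_i$ and a certain twist of $h_i$ by $\ast$; verifying this uses the \emph{classification} of hermitian/symmetric/alternating forms over local fields (rank, discriminant, Hasse invariant, etc.), together with a trace computation relating the invariants of $h_i$ over $E_i$ to those of $\vform|_{V_i}$ over $F'$. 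Witt's theorem alone does not produce it, and for the quaternionic and unitary cases the semilinearity issue means $\ta$ should really be taken $F$-linear rather than $F'$-linear (the paper's phrasing $\ta \in \mathrm{Aut}_{F'}(V)$ is only literally correct in the split case). This bookkeeping is precisely the content of \cite{MVW} Chapter~4, Proposition~I.2, so your sketch is aimed in the right direction but stops short of the key lemma rather than proving it.

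Two small additional remarks. First, your claim that in the symmetric (orthogonal) case $\ta$ is form-preserving and hence the statement reduces to self-duality of every $\pi$ is correct, but that self-duality is itself nontrivial and is again Proposition~I.2 plus the character argument, so it is not a genuine shortcut. Second, you work only with regular semisimple $g$, which does simplify the $F'[g]$-module to a product of $1$-dimensional $E_i$-spaces; this is a legitimate simplification under the Harish-Chandra-regularity route, and is a point worth stating explicitly since the general-$g$ version in \cite{MVW} has to handle higher-dimensional isotypic pieces.
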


\section{Casselman's pairing} 
We work in this section and the next in a general setting. Thus let $G$ be the group of $F$-points of a connected reductive $F$-group.  
As above, let $\ta$ be an involution on $G$ and let $\pv$ be an irreducible smooth representation of $G$ such that 
$\pit \simeq \pic$.  We use Casselman's pairing to show that  under suitable 
(widely applicable) conditions the sign $\spit$ can be studied via certain Jacquet modules of $\pi$.

\subsection{}  \label{setup}
We fix a maximal $F$-split torus $A$ in $G$ and write $\Phi = \Phi(A,G)$ for the set of roots of $A$ in $G$. 
(Here and throughout the paper we follow standard abuses of notation in failing to distinguish between algebraic $F$-groups and their groups of 
$F$-points.) We also fix a minimal $F$-parabolic subgroup $P_{\text{min}}$ 
of $G$ containing $A$.  
The group $P_{\text{min}}$ corresponds to a positive system $\Phi^+$ in $\Phi$.  We write $\De$ for  the unique
simple system contained in $\Phi^+$. 
Further, let $P$ be a standard parabolic subgroup of $G$ (i.e., $P \supset P_{\text{min}}$) and write
$M$ for the standard Levi component of $P$ (i.e., the 
unique Levi component containing $A$). Thus  
$P = M \ltimes N$ where $N$ denotes the unipotent radical of $P$. 
Note, for later use,  that  $M$ corresponds to a (unique) subset $\Ta$ of $\De$
in such a way that the maximal $F$-split torus $A_M$ in the center of $M$ is the identity component of  
$\bigcap_{\alpha \in \Ta} \ker \,\alpha$. We write $\overline{P}$ for the $M$-opposite of $P$ and $\overline{N}$ 
for the unipotent radical of $\overline{P}$.    

We impose three assumptions on the involution $\ta$: 

\smallskip

\begin{enumerate}[(a)]

\item
$\ta$ is an automorphism of $G$ as an algebraic group; 

\medskip

\item
$\ta$ preserves $A$ so that $\ta | A$ defines an involutory automorphism of the $F$-split torus $A$;   

\medskip

\item
${}^{\ta} N = \overline{N}$. 

\end{enumerate}

\begin{Rmk}
The involutions of  \S\ref{GK} and \S\ref{MSR}  clearly satisfy these conditions with respect to the standard  
maximal $F$-split tori consisting of (suitable) diagonal matrices and with respect to all standard parabolic subgroups of block upper triangular 
matrices.  Our framework also accommodates (in the connected case) the examples of \S\ref{MVW}.
\end{Rmk}

By (a) and (b), $\ta$ induces an involution $\alpha \mapsto \ta(\alpha)$ of $\Phi$  
where $\ta(\alpha)(a) = \alpha({}^{\ta} a)$, for $a \in A$.  
The set of roots of $A$ in $\text{Lie}(N)$ (respectively $\text{Lie}(\overline{N})$) is exactly 
 $\Phi^+ \setminus \mathbb{Z} \Ta$  (respectively $\Phi^- \setminus \mathbb{Z} \Ta$ where, as usual, $\Phi^- = - \Phi^+$). 
 Thus (c) is equivalent to 
\begin{equation} \label{roots}
   \ta(\Phi^+ \setminus \mathbb{Z} \Ta)  = \Phi^- \setminus \mathbb{Z} \Ta.  
\end{equation}
It follows that 
\[
   \ta(\Phi \cap \mathbb{Z} \Ta) = \Phi \cap \mathbb{Z} \Ta,  
\]
and hence ${}^{\ta}A_M = A_M$. Since $M = C_G(A_M)$, we see also that 
\[
{}^{\ta}M  =  M.
\]

\subsection{}

We write $(\pi_N, V_N)$ for the 
normalized Jacquet module of $\pv$ with respect to $P$. 
Explicitly, $V_N = V / V(N)$ where
\[
  V(N) = \langle \, \pi(n) v - v : v \in V, \, n \in N \, \rangle.
\]
For $v \in V$, we set $\overline{v} = v + V(N)$, so that, for $m \in M$,  
\[
  \pi_N(m) \, \overline{v} = \depm(m) \, \overline{\pi(m)\,v },   
\]
where $\dep$ denotes the modulus character of $P$.
We sometimes write $V^\ta$ for the space $V$ when viewed
as the space of the representation $\pit$ and  
use similar notation in other settings. 
In particular,
$(V^\ta)_{\overline{N}}$ denotes  the space of the Jacquet module of $\pit$ 
relative to $\overline{P}$ on which $M$ acts via 
$(\pit)_{\overline{N}}$. Explicitly, if 
$\overline{v} = v + V^\ta(\overline{N})$, for $v \in V^\ta$, then 
\[
(\pit)_{\overline{N}}(m)  
 \overline{v}  =   \depom(m) \, \overline{\pi^\ta(m)v },  
\] 
for $m \in M$.

We fix a non-degenerate $G$-invariant bilinear form $\hform$ on $V \times V^\ta$ as in (\ref{theta-invariance}).
We record what Casselman's pairing \cite[\S4]{Cass} yields in this   
setting. For $\epsilon > 0$, recall that 
\[
A_M (\epsilon)    =  \{ \, a \in A_M : ||\alpha(a)|| < \epsilon, \, 
           \forall \,\, \alpha \in \De \setminus \Ta \, \}, 
\]
where $\Ta$ corresponds to $M$ as in \S \ref{setup} and 
where $||\,\,||$ denotes the normalized absolute value on $F$.

\begin{cass}
There is a unique pairing  
$\jform:V_{\mathstrut N} \times (V^\ta)_{\mathstrut \overline{N}} \to \cx$ 
with the following property:   
given $u, v \in V$, there is an $\epsilon > 0$ such that
\[
    (\pi(a)u, v)  = 
           \depp (a) \,(\pi_N(a) \overline{u}, \overline{v})_N, \quad \,\, \forall \,a \in A_M (\epsilon). 
\]
The pairing $\jform$ is non-degenerate and $M$-invariant. 
\end{cass}

\begin{Rmk}
Casselman works with the canonical $G$-invariant pairing between 
an admissible smooth representation and its smooth dual.  
The proofs in \cite{Cass} carry over immediately to our setting  as the only necessary properties of the pairing are $G$-invariance and 
non-degeneracy. Alternative references are \cite{Bu} and \cite{Ren}~VI.9.6 which rework Casselman's construction (using ideas of Bernstein)  
and remove the admissibility assumption.  
\end{Rmk}

\subsection{}
We observe next that $(\pi^\ta)_{\overline{N}} = (\pi_N)^\ta$. 
Indeed,  
\begin{align*}
 V^\ta (\overline{N}) &= \langle \, \pi^\ta(\overline{n}) v - v :  v \in V, \, \overline{n} \in \overline{N} \, \rangle \\
                                    &=  \langle \, \pi({}^{\ta}\overline{n}) v - v : v \in V, \, \overline{n} \in \overline{N} \, \rangle \\
                                    &=  \langle \, \pi(n) v - v : v \in V, \, n \in N \, \rangle \quad (\text{by assumption (c) of \ref{setup}}) \\
                                    & =  V(N), 
\end{align*}
and so $(\pi^\ta)_{\overline{N}}$ and $(\pi_N)^\ta$ share the same underlying space. 

For $m \in M$ and $v \in V$, we have 
\begin{align*}
      (\pi^{\ta})_{\overline{N}} \, \overline{v}  &=   \depom(m)\,\overline{\pi^\ta(m)\, v}, \\
 (\pi_N)^\ta (\overline{v})  &= \deppo({}^{\ta}m) \, \overline{\pi({}^{\ta}m)\,v}. 
\end{align*}
Thus $(\pi^\ta)_{\overline{N}} = (\pi_N)^\ta$ provided  $ \depo(m) = \dep({}^{\ta}m)$, or equivalently  
\begin{equation}  \label{modtheta}
    \depo({}^{\ta} m) = \dep(m), \quad \,\,\, m \in M. 
\end{equation}
To check this, let $K$ be any compact open subgroup of $N$ and set $K^m = m^{-1} K m $. Then
\begin{equation}  \label{modp}
 \dep(m) =  [K: K^m],
 \end{equation} 
 where we have used the generalized index notation
\[
[K:  K^m ]  =  \frac{ [K:K \cap K^m] } {  [ K^m: K \cap K^m ] }.
\]
Applying $\ta$,  we obtain 
\[
[K: K^m]  = [ {}^{\ta}K: ({}^{\ta} K)^{   {}^{\ta} m }  ],  
\]
and so (\ref{modtheta}) follows (using (\ref{modp}) and its analogue for $\overline{P}$).
 
We can therefore view Casselman's pairing as a pairing on 
$V_N \times (V_N)^\ta$, and will do this from now on. For emphasis, we 
rewrite its defining properties in these terms.

\begin{Prop}
There is a unique pairing  
$\jform:V_N \times (V_N)^\ta \to \cx$ with 
the following property:   
given $u, v \in V$, there is an $\epsilon > 0$ such that, for any 
$a \in A_M (\epsilon)$, 
\[
    (\pi(a)u, v)  = 
           \depp (a) \,(\pi_N(a) \overline{u}, \overline{v})_N. 
\]
The pairing $\jform$ is non-degenerate and $M$-invariant:  
for $m \in M$ and $u, v \in V$, 
\[ 
    (\pi_N(m) \overline{u}, \pi_N({}^{\ta}m) \overline{v} )_N  = 
                    (\overline{u}, \overline{v})_N.
\] 
\end{Prop}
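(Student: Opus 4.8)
The plan is to obtain the Proposition as a near-verbatim transcription of Casselman's pairing, using the identification $(\pi^\ta)_{\overline{N}} = (\pi_N)^\ta$ established in the paragraph above.

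First I would recall what Casselman's pairing supplies: a unique pairing $\jform$ on $V_N \times (V^\ta)_{\overline{N}}$ satisfying the asymptotic identity $(\pi(a)u,v) = \depp(a)\,(\pi_N(a)\ou,\ov)_N$ for $a$ in a sufficiently small cone $A_M(\epsilon)$ (depending on $u$ and $v$), this pairing being non-degenerate and $M$-invariant. We have just checked that $(\pi^\ta)_{\overline{N}}$ and $(\pi_N)^\ta$ are one and the same representation of $M$: they act on the common space $V/V(N) = V^\ta/V^\ta(\overline{N})$, and their actions agree precisely because of the modulus identity (\ref{modtheta}), which itself follows from the index formula (\ref{modp}) and its analogue for $\overline{P}$. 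Hence the very object furnished by Casselman's pairing may be read as a pairing $\jform : V_N \times (V_N)^\ta \to \cx$, and its existence, uniqueness, defining property, and non-degeneracy are then exactly those asserted in the Proposition, with nothing further to prove.

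It remains only to rewrite the $M$-invariance in the new notation. Casselman's pairing is $M$-invariant in the sense that $(\pi_N(m)\ou,\, (\pi^\ta)_{\overline{N}}(m)\ov)_N = (\ou,\ov)_N$ for all $m \in M$. Since $(\pi^\ta)_{\overline{N}}(m)\ov = (\pi_N)^\ta(m)\ov = \pi_N({}^{\ta}m)\ov$, this becomes $(\pi_N(m)\ou,\, \pi_N({}^{\ta}m)\ov)_N = (\ou,\ov)_N$, which is the stated identity.

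I do not expect any genuine obstacle. All the analytic content — convergence of the defining limit, uniqueness, and non-degeneracy — is imported wholesale from Casselman's construction, exactly as flagged in the Remark following the statement of Casselman's pairing; and the equality of the two Jacquet modules was established just above. The only point demanding care is the bookkeeping of modulus characters ($\depp$ versus $\depom$) and of the roles of $N$ and $\overline{N}$ under $\ta$ — that these match is precisely the content of (\ref{modtheta}).
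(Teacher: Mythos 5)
Your proposal is correct and follows the paper's route essentially verbatim: the Proposition is precisely Casselman's pairing read through the identification $(\pi^\ta)_{\overline{N}} = (\pi_N)^\ta$ established just above, with the $M$-invariance relation unwound exactly as you do using $(\pi_N)^\ta(m) = \pi_N({}^{\ta}m)$. Nothing is missing.
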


\subsection{}
Using uniqueness of Casselman's pairing, we now show that the form $\jform$ inherits the symmetry 
or skew-symmetry of the original form $\hform$.  

To see this, we claim first that the map $a \mapsto {}^{\ta}a^{-1}$ preserves $A_M(\epsilon)$ for $0 < \epsilon < 1$ . Indeed, 
if $\alpha \in \De \setminus \Ta$ then $\ta(\alpha) \in \Phi^- \setminus \mathbb{Z} \Ta$.  Writing $-\ta(\alpha)$ as a non-negative 
integral combination of roots in $\De$, we see that  some element of $\De \setminus \Ta$ must occur with a nonzero (hence positive) 
coefficient,   whence the claim. 

Now let $u, v \in V$ and set 
\[
   (\ou, \ov)_N'  =  (\ov, \ou)_N.
\]
Choose $\epsilon$ with $0 <  \epsilon < 1$ such that
\[
    (\pi(a)u, v)  = 
           \depp (a) \,(\pi_N(a) \ou, \ov)_N, \quad \,\, \forall \, a \in A_M (\epsilon).     
\]
Thus  
\begin{align*}    
     (\pi(a) u, v)  &=  \depp(a) \, (\ou, \pi_N({}^{\ta}a^{-1})\ov)_N \\     
             &=  \depp(a) \, (\pi_N({}^{\ta}a^{-1})\ov, \ou)_N',   \quad \,\, \forall \, a \in A_M (\epsilon).     
\end{align*} 
We also have
\begin{align*}    
\quad (\pi(a) u, v)  &=  (u, \pi({}^{\ta}a^{-1})v) \\
                                     &=  \spit \, (\pi({}^{\ta}a^{-1})v, u),  \quad \,\, \forall \, a \in A_M (\epsilon),  
\end{align*}
and hence  
\begin{equation} \label{keycass}
 \spit \, (\pi({}^{\ta}a^{-1})v, u)   =   \depp(a)\, (\pi_N({}^{\ta}a^{-1})\ov, \ou)_N',    \quad \,\, \forall \, a \in A_M (\epsilon).     
\end{equation} 
For any $a \in A_M$,  
\[
         \dep({}^{\ta}a^{-1})  = \depo(a^{-1})  = \dep(a).
\]         
Here the first equality is given by (\ref{modtheta}). The second follows, for example, from the formula                                
\[
    \dep(a) = ||\det (\text{Ad}\,a : \text{Lie}(N))||,  
\]
(valid for $a \in A$) and its analogue for $\overline{P}$ once one notes that the roots of $A$  in $\text{Lie}(\overline{N})$ are
the negatives (with multiplicity) of the roots of $A$ in $\text{Lie}(N)$. 
Since $a \mapsto {}^{\ta}a^{-1}$ preserves $A_M(\epsilon)$, we can rewrite (\ref{keycass}) as
\[
\spit \, (\pi(a)v, u)   =   \depp(a)\, (\pi_N(a)\ov, \ou)_N',    \quad \,\, \forall \, a \in A_M (\epsilon).     
\]
Thus, by uniqueness of Casselman's pairing, 
\[
  \spit \, \jform '  =  \jform, 
\]
i.e., the form $\jform'$ admits a sign $\spin$ and 
\begin{equation}  \label{descent}
      \spin = \spit.  
\end{equation}

\subsection{} \label{mult-one}
We record a simple but crucial consequence  of the preceding discussion.  

\begin{Cor}
Let $\tau$ be an irreducible subrepresentation of $\pi_N$ that occurs with multiplicity one (as a composition factor of $\pi_N$) and suppose that 
$\tau^\ta  \simeq \tc$. Then 
\[
    \spit = \sptn. 
\]
\end{Cor}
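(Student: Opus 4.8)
The goal is to deduce the Corollary from the relation $\spit = \spin$ established in (\ref{descent}), together with the symmetry properties of Casselman's pairing $\jform$ on $V_N \times (V_N)^\ta$. The plan is to exploit the multiplicity-one hypothesis to show that the bilinear form on $V_N$ "restricts" to $\tau$ in a meaningful way, and that the restricted form is non-degenerate with the same sign.

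First I would set up the relevant subspaces. Let $W \subseteq V_N$ be an irreducible subrepresentation isomorphic to $\tau$, and consider its $\ta$-twist $W^\ta \subseteq (V_N)^\ta$ (the same underlying subspace, since $(V_N)^\ta$ has the same space as $V_N$). The key point is that $W^\ta \simeq \tau^\ta \simeq \tc$. Now Casselman's pairing $\jform$ gives a non-degenerate $M$-invariant pairing $V_N \times (V_N)^\ta \to \cx$. I would look at the restriction of this pairing to $W \times W^\ta$. A priori this restriction could vanish; to rule that out I would use the multiplicity-one hypothesis. Consider the radical $R = \{ \ov \in V_N : (W, \ov)_N = 0 \}$ on the right-hand side; by $M$-invariance $R$ is an $M$-subrepresentation of $(V_N)^\ta$. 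If the pairing restricted to $W \times W^\ta$ were zero, then $W^\ta \subseteq R$, i.e. $W^\ta$ would be a subrepresentation of $R$; dually, $W$ would appear in the quotient $(V_N)^\ta / R$. But $(V_N)^\ta / R$ embeds (via the pairing) into $W\spcheck$, which is irreducible, so $(V_N)^\ta/R \simeq W\spcheck$ has a unique constituent, namely $\tc \simeq W^\ta$; meanwhile $W^\ta$ also sits inside $R$, giving $\tc \simeq \tau^\ta$ as a constituent of $\pi_N^\ta$ with multiplicity at least two, contradicting multiplicity one (note $\pi_N$ and $\pi_N^\ta$ have the same constituents up to the $\ta$-twist, which is a bijection on constituents). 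Hence the restriction $\jform |_{W \times W^\ta}$ is non-zero.

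Next, since $W \simeq \tau$ is irreducible and $W^\ta \simeq \tc$, a non-zero $M$-invariant pairing $W \times W^\ta \to \cx$ is automatically non-degenerate (its radical on either side is a proper subrepresentation, hence zero) and, by Schur, unique up to scalar. So the restricted form is, up to scalar, \emph{the} invariant form on $\tau$ defining $\sptn$. It remains to check the sign is preserved under restriction. By (\ref{descent}) the form $\jform$ on $V_N$ is symmetric if $\spit = 1$ and skew-symmetric if $\spit = -1$, in the sense that $(\ov, \ou)_N = \spit\,(\ou,\ov)_N$ for all $\ou, \ov \in V$ (where we use that $\jform'$, defined by swapping arguments, satisfies $\spit\,\jform' = \jform$). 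Restricting this identity to $\ou, \ov$ coming from $W$ shows the non-zero form on $\tau$ has the same symmetry type, so $\sptn = \spit$, and combined with $\spin = \spit$ we are done.

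\textbf{Main obstacle.} The only non-routine step is the non-vanishing of the restricted pairing, i.e. showing that the multiplicity-one hypothesis forces $\jform$ to pair $W$ non-trivially against $W^\ta$. The subtlety is bookkeeping the constituents correctly: one must track how $\ta$ permutes the constituents of $\pi_N$ (it is a bijection sending $\tau$-type constituents to $\tau^\ta = \tc$-type constituents), and combine this with the duality $(V_N)^\ta/\mathrm{rad} \hookrightarrow W\spcheck$ coming from Casselman's non-degenerate pairing, to produce the multiplicity-two contradiction. Everything else — non-degeneracy of a non-zero invariant pairing between an irreducible and its dual, uniqueness up to scalar via Schur, and inheritance of the symmetry type under restriction — is immediate from the formalism already developed in \S\ref{inv-form}.
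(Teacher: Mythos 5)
Your argument is correct and follows essentially the same route as the paper: both use the non-degeneracy of Casselman's pairing to produce a nonzero $M$-map $(\pi_N)^\ta \to \tau\spcheck$, then invoke the multiplicity-one hypothesis to conclude that the given copy $Y$ of $\tau$ cannot lie in the kernel, so the restriction $\jform\,|\,Y \times Y$ is a nonzero invariant form inheriting the symmetry type $\spin = \spit$. The paper phrases the multiplicity-one step directly (the irreducible quotient $\rho \simeq \tau$ must coincide with $Y$) while you phrase it as a proof by contradiction (a vanishing restriction would force $\tau^\ta$ to occur both in the radical and in the quotient), but these are the same observation.
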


\begin{proof}  
Writing $Y$ for the space of the subrepresentation $\tau$ of $\pi_N$, the $M$-map  
\[
 x \mapsto (x,-)_N\, | \,Y: (\pi_N)^\ta \to \tau\spcheck 
\]
is nonzero. Thus there is an irreducible subquotient 
$\rho$ of $\pi_N$ such that  $\rho^\ta \simeq \tc$ and so $\rho \simeq \tau$. Hence
$\rho=\tau$  and  $\jform \mid Y \times Y$ is nonzero. Therefore $\sptn = \spin$. The result now follows from (\ref{descent}).
\end{proof}

\section{Reduction to tempered case} 
We continue in the setting and with the notation of the previous section. 
In particular,  $\pi$ is an irreducible smooth representation of $G$ and $\ta$ is an involution on $G$ such that $\pit \simeq \pic$. Further,  
$\ta$ remains subject to the assumptions of  \S\ref{setup} for any standard parabolic subgroup under discussion.
The Langlands classification attaches  a pair $(M,\tau)$ to $\pi$ where $M$ is a standard Levi subgroup of $G$ and $\tau$ is an irreducible
tempered representation of $M$.  We recall some of the details below.
Using the strong uniqueness properties of the classification and certain related properties, we show that 
$\tat  \simeq \tc$ and that $\spit = \sptn$. Thus the problem of  determining  
$\ta$-twisted signs reduces (under our assumptions) to the case of tempered representations.

\subsection{}
We first  review a small amount of detail concerning an ingredient of the Langlands classification (following for the most part the discussion in
 \cite{MW} \S1.1). 

All tensor products below are over $\mathbb{Z}$. 
For any algebraic group $L$ defined over $F$, we set $X(L) =  \text{Hom}_F(L, \mathbb{G}_m)$,  the group of 
$F$-rational characters of the algebraic group $L$ and put $\fals = X(L) \otimes  \mathbb{R}$.  When $L = A$, our fixed maximal
$F$-split torus in $G$, we simply write $\fas = X(A) \otimes \mathbb{R}$. 

Let $M$ be a  standard Levi subgroup of $G$. As in \S\ref{setup}, $M$ corresponds to a subset $\Theta$ of our fixed simple system $\Delta$ 
in the set of roots $\Phi$ of $A$ in $G$. Let $\rpos$ denote  the multiplicative group of positive real numbers.  We identify $\fams$ with
$\text{Hom}(M,\rpos)$, the group of continuous homomorphisms from $M$ to $\rpos$, via the isomorphism 
\begin{equation}  \label{rpchar}
\chi \otimes r \longmapsto (m \mapsto || \chi(m) ||^r): \fams \overset{\simeq}{\longrightarrow}  \text{Hom}(M,\rpos).
\end{equation}

The restriction map $\chi \mapsto \chi \, | \, A:X(M) \to X(A)$ is injective and so induces a canonical  injection from 
$\fams$ to $\fas$ via which we view $\fams$ as a subspace of $\fas$.  As usual, we set $X_\ast(A)= \text{Hom}(\mathbb{G}_m, A)$, the lattice of
one-parameter subgroups of $A$.  Associated to each $\alpha \in \Phi$
is a certain element $\alpha\spcheck \in X_\ast(A)$, the coroot corresponding to $\alpha$. The canonical 
pairing between $X_\ast(A)$ and $X(A)$ extends by  $\mathbb{R}$-linearity to a non-degenerate pairing  $\vform$ between 
$\mathfrak{a} = X_\ast(A) \otimes \mathbb{R}$ and $\fas$. In these terms, 
\begin{equation} \label{coroots}
    \fams = \{ \nu \in \fas :  \langle \alpha\spcheck,  \nu \rangle = 0, \,\,\forall \, \alpha \in \Theta \}.
\end{equation} 
Let
\[
   \fan = \{ \nu \in \fams : \langle \alpha\spcheck, \nu \rangle <  0, \,\,\forall \, \alpha \in \Delta \setminus \Theta \}.
\]
We may view the elements of $\fan$ as certain linear characters on $M$ via (\ref{rpchar}) and will do this without comment below.

By hypothesis, the involution $\ta$ preserves $A$ . There is  an induced action of $\ta$ on $X(A)$ (via $\ta(\chi) = \chi \circ \ta$ for $\chi \in X(A)$)
and a dual action on $X_\ast(A)$ and these extend to actions on $\fas$ and $\mathfrak{a}$. 
Note that the pairing $\vform:\mathfrak{a} \times \fas \to \mathbb{R}$ is $\ta$-invariant (since $\ta^2 = 1$).  

\begin{Lem}  \label{tafan}
The map $\nu \mapsto - \ta(\nu): \fas \to \fas $ preserves $\fan$ (i.e., it maps $\fan$ to itself).
\end{Lem}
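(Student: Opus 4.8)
The plan is to rewrite the sign conditions defining $\fan$ after applying the map $\nu \mapsto -\ta(\nu)$, reducing the lemma to purely combinatorial facts about $\Phi$. Two ingredients make this possible: the pairing $\vform : \mathfrak{a} \times \fas \to \mathbb{R}$ is $\ta$-invariant, and $\ta$ --- being an algebraic automorphism preserving $A$ --- induces an automorphism of the root datum of $(G,A)$; in particular $\ta$ permutes $\Phi$, permutes the coroots, and is compatible with the coroot map, $\ta(\alpha\spcheck) = \ta(\alpha)\spcheck$ for all $\alpha \in \Phi$. Granting these, for $\alpha \in \De$ and $\nu \in \fas$ one has, using $\ta^2 = 1$,
\[
  \langle \alpha\spcheck, -\ta(\nu)\rangle = -\langle \alpha\spcheck, \ta(\nu)\rangle = -\langle \ta(\alpha\spcheck), \nu\rangle = -\langle \ta(\alpha)\spcheck, \nu\rangle,
\]
so the conditions cutting out $\fan$ translate into statements about the roots $\ta(\alpha)$, $\alpha \in \De$.

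First I would note that $-\ta$ maps the subspace $\fams$ into itself: $\ta$ preserves $M$ (shown in \S\ref{setup}), hence preserves $X(M) \subseteq X(A)$ and therefore $\fams$, and $-1$ preserves any linear subspace. (Equivalently, via (\ref{coroots}): for $\alpha \in \Theta$ one has $\ta(\alpha) \in \Phi \cap \mathbb{Z}\Theta$ by the consequence of (\ref{roots}) recorded in \S\ref{setup}, so $\ta(\alpha)\spcheck$ lies in the span of $\{\gamma\spcheck : \gamma \in \Theta\}$ and hence pairs to $0$ with any $\nu \in \fams$; by the display this is exactly $\langle \alpha\spcheck, -\ta(\nu)\rangle = 0$.)

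The substantive step is to show, for $\nu \in \fan$ and $\alpha \in \De \setminus \Theta$, that $\langle \alpha\spcheck, -\ta(\nu)\rangle < 0$. Since $\alpha$ is a simple root not in $\Theta$ we have $\alpha \in \Phi^+ \setminus \mathbb{Z}\Theta$, so (\ref{roots}) gives $\ta(\alpha) \in \Phi^- \setminus \mathbb{Z}\Theta$; put $\beta = -\ta(\alpha)$, a positive root not lying in $\mathbb{Z}\Theta$. Using $(-\gamma)\spcheck = -\gamma\spcheck$, the display above reads $\langle \alpha\spcheck, -\ta(\nu)\rangle = \langle \beta\spcheck, \nu\rangle$. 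Now expand $\beta\spcheck$ in simple coroots: as $\beta\spcheck$ is positive for the base $\De\spcheck$ of the dual root system $\Phi\spcheck$, we have $\beta\spcheck = \sum_{\gamma \in \De} m_\gamma\, \gamma\spcheck$ with $m_\gamma \in \mathbb{Z}_{\geq 0}$, and since $\beta \notin \mathbb{Z}\Theta$ some $m_{\gamma_0}$ with $\gamma_0 \in \De \setminus \Theta$ is positive (otherwise $\beta\spcheck \in \mathbb{Z}\Theta\spcheck \cap \Phi\spcheck$, which by the standard correspondence $\Phi \cap \mathbb{Z}\Theta \leftrightarrow \Phi\spcheck \cap \mathbb{Z}\Theta\spcheck$ forces $\beta \in \mathbb{Z}\Theta$). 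Pairing against $\nu \in \fan$ annihilates the terms with $\gamma \in \Theta$ (as $\nu \in \fams$) and leaves $\langle \beta\spcheck, \nu\rangle = \sum_{\gamma \in \De \setminus \Theta} m_\gamma \langle \gamma\spcheck, \nu\rangle \leq m_{\gamma_0}\langle \gamma_0\spcheck, \nu\rangle < 0$. Together with the previous paragraph this gives $-\ta(\nu) \in \fan$ for every $\nu \in \fan$, which is the lemma. (The computation is the exact analogue, one level up, of the earlier verification that $a \mapsto {}^{\ta}a^{-1}$ preserves $A_M(\epsilon)$ for $0 < \epsilon < 1$.)

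I expect the only delicate point to be the root-system bookkeeping in the last step --- the $\ta$-equivariance of the coroot map, and the standard facts that $\alpha \mapsto \alpha\spcheck$ carries $\De$ to $\De\spcheck$ and $\Phi^+$ to $(\Phi\spcheck)^+$ and that parabolic sub-root-systems correspond under duality. None of this is deep; once the reduction in the first paragraph is in place the argument is combinatorial, with no analytic or representation-theoretic content.
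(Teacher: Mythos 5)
Your argument is correct and follows the same route as the paper's proof: both use the $\ta$-invariance of the pairing and the identity $\ta(\alpha\spcheck)=\ta(\alpha)\spcheck$ to rewrite $\langle\alpha\spcheck,-\ta(\nu)\rangle$, then exploit (\ref{roots}) to see that $\ta(\alpha)$ for $\alpha\in\De\setminus\Theta$ lies in $\Phi^-\setminus\mathbb{Z}\Theta$ and hence involves some simple (co)root outside $\Theta$ with a negative coefficient. The only genuine difference is one of bookkeeping: the paper expands $\ta(\alpha)$ in simple roots and implicitly uses the fact that a root's expansion in $\De$ has the same support and sign pattern as the corresponding coroot's expansion in $\De\spcheck$, whereas you pass directly to the coroot side and expand $\beta\spcheck=(-\ta(\alpha))\spcheck$ in the simple coroots, invoking the correspondence $\Phi\cap\mathbb{Z}\Theta\leftrightarrow\Phi\spcheck\cap\mathbb{Z}\Theta\spcheck$ explicitly. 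Your version is a bit more pedantic but fills in exactly the step the paper leaves implicit; mathematically they are the same proof.
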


\begin{proof}
Let $\alpha \in \Delta \setminus \Theta$. By (\ref{roots}), $\ta(\Delta \setminus \Theta) \subset \Phi^- \setminus \mathbb{Z}\Theta$. Thus  
if we write $\ta(\alpha)$ as a linear combination of elements of $\Delta$, then the coefficient of some element of $\Delta \setminus \Theta$ 
must  be negative. The statement is now an immediate consequence of (\ref{coroots}) using the $\ta$-invariance of $\vform$ and the identity 
$\ta(\alpha\spcheck)  = \ta(\alpha)\spcheck$, for $\alpha \in \Phi$.
\end{proof}
\noindent
We invariably use multiplicative notation when viewing the elements of $\fan$ as characters on $M$ 
and so the lemma then says that $\nu \mapsto (\nu^\ta{})^{-1}$ preserves $\fan$.

\subsection{} \label{reduction}
The Langlands classification uniquely attaches a certain triple $(P, \tau, \nu)$  to $\pi$. This consists of a standard parabolic subgroup $P$ of $G$,
an irreducible tempered representation $\tau$ of the standard Levi component $M$ of $P$ (up to equivalence), 
and an element $\nu \in \fan$. 
The representation $\pi$ is then the unique irreducible subrepresentation of the normalized induced representation $\indp (\tau \nu)$. 
We note some additional (closely related) properties of the classification.

\smallskip

\begin{enumerate}[(a)] 
\item 
$\pi$ is also the unique irreducible quotient  of $\indpb (\tau \nu)$. (See \cite{BW}~XI~ 2.7 but observe that the roles
of $P$ and $\overline{P}$ are reversed.) 
\smallskip
\item
$\tau \nu$ occurs as a subrepresentation of the Jacquet module $\pi_N$ and has multiplicity one as a composition factor of $\pi_N$.
In fact, a stronger statement holds by \cite{BJ}~\S5.
\end{enumerate}

\begin{Prop}
Let $\pi$ be an irreducible smooth representation of $G$ such that $\pit \simeq \pic$. Suppose the Langlands classification attaches the 
triple $(P, \tau, \nu)$ to $\pi$. Then $\tau^\ta  \simeq \tc$ and $\spit = \sptn$.
\end{Prop}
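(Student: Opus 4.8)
The plan is to combine the descent result of the previous section with the rigidity of the Langlands data. The key observation is that the Langlands triple $(P,\tau,\nu)$ attached to $\pi$ is \emph{unique}, so applying the involution $\ta$ to everything in sight must reproduce the same triple, up to the symmetries the classification allows. Concretely, since $\pit \simeq \pic$, I would first determine the Langlands triple of $\pic$ and of $\pit$ and match them.

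\medskip\noindent\textbf{Step 1: Langlands data of the contragredient.} If $\pi$ is the unique irreducible subrepresentation of $\indp(\tau\nu)$, then $\pic$ is the unique irreducible quotient of $(\indp(\tau\nu))\spcheck \simeq \indp((\tau\nu)\spcheck) = \indp(\tc\,\nu^{-1})$. Since $\tau$ is tempered, so is $\tc$; and $\nu^{-1} \in -\fan$, so by property (a) of \S\ref{reduction} (applied with $\overline P$ in place of $P$, noting the reversal of roles there), $\pic$ is the unique irreducible subrepresentation of $\indp(\tc\,\nu^{-1})$ precisely when $\nu^{-1}$ lies in the correct cone — which it does after passing to the $\overline P$-picture. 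Hence the Langlands triple of $\pic$ is $(P,\tc,\nu^{-1})$ (possibly after replacing $P$ by a conjugate; I would use uniqueness to pin it down to the standard one).

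\medskip\noindent\textbf{Step 2: Langlands data of the $\ta$-twist.} Because $\ta$ is an algebraic automorphism preserving $A$ and sending $N$ to $\overline N$ (assumptions (a)--(c) of \S\ref{setup}), it carries the standard parabolic $P$ to $\overline P$ and fixes $M$ setwise, with ${}^{\ta}M = M$. Twisting the Langlands realization $\pi \hookrightarrow \indp(\tau\nu)$ by $\ta$ gives $\pit \hookrightarrow \indpb(\tat\,\nu^\ta)$, i.e. $\pit$ is a subrepresentation of induction from $\overline P$; by property (a) again this means $\pit$ is the unique irreducible quotient of $\indpb$, equivalently the unique irreducible subrepresentation of $\indp((\tat\,\nu^\ta))$ after the standard manipulation. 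By Lemma~\ref{tafan}, $\nu^\ta \in -\fan$, so $(\nu^\ta)^{-1}\in\fan$; tracking signs carefully, the Langlands triple of $\pit$ comes out to be $(P,\tat,(\nu^\ta)^{-1})$.

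\medskip\noindent\textbf{Step 3: Matching and conclusion.} From $\pit \simeq \pic$ and the uniqueness of Langlands data, Steps 1 and 2 force $\tat \simeq \tc$ and $(\nu^\ta)^{-1} = \nu^{-1}$, i.e. $\nu^\ta = \nu$. Now apply the machinery of \S4: by property (b) of \S\ref{reduction}, $\tau\nu$ is an irreducible subrepresentation of $\pi_N$ occurring with multiplicity one as a composition factor, and $(\tau\nu)^\ta = \tat\,\nu^\ta \simeq \tc\,\nu = \tc\,\nu^{\ta}$; since $\nu^\ta=\nu$ and the central-twist character satisfies $(\tau\nu)\spcheck = \tc\,\nu^{-1}$, I would check that $(\tau\nu)^\ta \simeq (\tau\nu)\spcheck$ precisely because $\nu = \nu^{-1}\cdot\nu^2$ — here one must verify $\nu^\ta = \nu^{-1}$ rather than $\nu^\ta = \nu$; this sign bookkeeping for $\nu$ is the delicate point and I expect it to be the main obstacle. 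Once $(\tau\nu)^\ta \simeq (\tau\nu)\spcheck$ is established, the Corollary of \S\ref{mult-one} gives $\spit = \varepsilon_\theta(\tau\nu)$, and since $\nu$ is a positive real unramified twist it contributes trivially to the sign, so $\varepsilon_\theta(\tau\nu) = \sptn$, completing the proof.

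\medskip\noindent The step I expect to be genuinely delicate is the precise determination of how $\ta$ acts on the Langlands parameter $\nu$ — whether the matching with $\pic$ yields $\nu^\ta = \nu$ or $\nu^\ta = \nu^{-1}$ — because the two inductions (from $P$ versus $\overline P$) and the two operations (twist versus dual) each reverse a cone, and one must confirm these reversals compose correctly with Lemma~\ref{tafan}. Everything else is a direct appeal to uniqueness in the Langlands classification together with the descent Corollary already proved.
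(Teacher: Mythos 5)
Your overall strategy — apply $\ta$, exploit uniqueness in the Langlands classification together with Lemma~\ref{tafan}, then invoke the multiplicity-one Corollary of \S\ref{mult-one} — is the paper's strategy. But your execution has a genuine gap in precisely the place you flag: the sign bookkeeping for $\nu$. In Step~3 you conclude $(\nu^\ta)^{-1}=\nu^{-1}$, i.e.\ $\nu^\ta=\nu$; the correct conclusion (and the one the paper obtains) is $\nu^\ta=\nu^{-1}$. The error originates in Step~2, where you write that twisting $\pi\hookrightarrow\indp(\tau\nu)$ by $\ta$ makes $\pit$ ``the unique irreducible \emph{quotient} of $\indpb$'': applying $\ta$ to a subrepresentation produces a subrepresentation, so what you actually get is $\pit\hookrightarrow\indpb(\tat\nu^\ta)$ with $\nu^\ta$ in the \emph{positive} cone. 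To read off a standard Langlands triple for $\pit$ from a sub-embedding of induction from $\overline P$ with a positive parameter is not one of the ready-made statements (a)--(b) of \S\ref{reduction}, and attempting to ``re-normalize'' it to a triple $(P,\tat,(\nu^\ta)^{-1})$ is unjustified; it is exactly here that the sign goes astray.

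The paper sidesteps the need to determine the Langlands data of $\pit$ and $\pic$ separately. It keeps $\pi$ on the left throughout and dualizes once: from $\pit\simeq\pic\hookrightarrow\indpb(\tat\,\nu^\ta)$ one gets, by duality of parabolic induction, that $\pi$ is a \emph{quotient} of $\indpb(\tat{}\spcheck\,(\nu^\ta)^{-1})$. Now $\tat{}\spcheck$ is tempered and, by Lemma~\ref{tafan}, $(\nu^\ta)^{-1}\in\fan$, so property~(a) of \S\ref{reduction} (unique irreducible quotient of $\indpb$ with negative parameter) applies directly and uniqueness forces $\tau\simeq\tat{}\spcheck$ and $\nu=(\nu^\ta)^{-1}$, i.e.\ $\tat\simeq\tc$ and $\nu^\ta=\nu^{-1}$. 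This is exactly the identity needed to give $(\tau\nu)^\ta\simeq(\tau\nu)\spcheck$, after which properties~(b) and the Corollary of \S\ref{mult-one} yield $\spit=\varepsilon_\theta(\tau\nu)=\sptn$ (the last equality being the tautology that twisting by $\nu$ and $\nu^\ta=\nu^{-1}$ cancels in the invariance condition for the form). Your final paragraph's appeal to ``$\nu=\nu^{-1}\cdot\nu^2$'' and to $\nu$ being ``a positive real unramified twist'' does not resolve the sign; the cancellation $\nu\cdot\nu^\ta=1$ only happens because $\nu^\ta=\nu^{-1}$, which is what needed to be proved.
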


\begin{proof}
Applying $\ta$ to the inclusion $\pi \hookrightarrow  \indp (\tau \nu)$ and using ${}^{\ta}P = \overline{P}$, we see that 
\[
     \pit \hookrightarrow  \indp (\tau \nu){\,^\ta} \simeq \indpb (\tau^\ta \nu^\ta).
\]
Thus $\pic$ embeds in $\indpb (\tau^\ta \nu^\ta)$. Dualizing, it follows that $\pi$ is a quotient of 
\[
   \indpb (\tau^\ta \nu^\ta)\spcheck \simeq \indpb (\tau^\ta {}\spcheck  (\nu^\ta){}^{-1}).
     \]
Clearly,  $\tau^\ta {}\spcheck$ is an irreducible tempered representation of $M$. By Lemma \ref{tafan},   $(\nu^\ta){}^{-1}  \in \fan$. 
Using (a) and uniqueness of the triple attached to $\pi$, we see that $\tau^\ta {}\spcheck \simeq \tau$ and $(\nu^\ta){}^{-1} = \nu$, or equivalently 
\[  
   \tau^\ta  \simeq \tc, \,\,\, \nu^\ta = \nu^{-1}.
   \]
Of course, this gives $(\tau \nu)^\ta \simeq (\tau \nu)\spcheck$.  We also have the trivial identity     
$\varepsilon_\ta(\tau \nu) = \sptn$. The result now follows from (b) above and Corollary \ref{mult-one} (applied to $\tau \nu$). 
\end{proof}

\section{Twisted signs for general linear groups I}   \label{mainapp}

Let $n$ be a positive integer and set $G = {\rm GL}_n(F)$. 
As in \S\ref{GK}, put ${}^{\ta}g =  {}^{\top}g^{-1}$ (where $\top$ denotes
\emph{transpose}).  
Let $\pi$ be a smooth irreducible representation of $G$. 
We show that the associated sign is invariably one.  
  
\begin{Thm}
Let $\pi$ be an irreducible smooth representation of $G$. Then $\spit = 1$.  
\end{Thm}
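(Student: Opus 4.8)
The plan is to combine the descent machinery of the previous sections with the classification of irreducible smooth representations of $\mathrm{GL}_n(F)$ in terms of discrete series, so as to reduce the assertion $\spit = 1$ to the case of generic representations, where it is known by the work of Vinroot. First I would invoke the Proposition at the end of \S5: the Langlands classification attaches a triple $(P,\tau,\nu)$ to $\pi$ with $\tau$ tempered on the standard Levi $M \simeq \prod_i \mathrm{GL}_{n_i}(F)$, the involution ${}^{\ta}g = {}^{\top}g^{-1}$ satisfies assumptions (a)--(c) of \S\ref{setup} for every standard parabolic (as noted in the Remark after \S\ref{setup}), and hence $\spit = \sptn$. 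Since the transpose-inverse involution on $\mathrm{GL}_n$ restricts on each block $\mathrm{GL}_{n_i}$ to the transpose-inverse involution on that factor, and $\tau = \bigotimes_i \tau_i$, the twisted sign of $\tau$ is the product $\prod_i \varepsilon_\theta(\tau_i)$ (a twisted sign of a tensor product is multiplicative, from the obvious tensor product of invariant forms). So it suffices to treat an irreducible tempered representation $\tau_i$ of a single $\mathrm{GL}_{n_i}(F)$.

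Next I would reduce the tempered case to the discrete series case. An irreducible tempered representation of $\mathrm{GL}_m(F)$ is, by Jacquet's classification (via the Bernstein--Zelevinsky theory), a full induced representation $\iota_Q^{\mathrm{GL}_m}(\delta_1 \otimes \cdots \otimes \delta_r)$ from a parabolic $Q$ with Levi $\prod_j \mathrm{GL}_{m_j}(F)$, where each $\delta_j$ is an (essentially) square-integrable representation, and this induced representation is irreducible. Applying the Proposition of the Introduction (or equivalently Corollary \ref{mult-one}, since $\delta_1 \otimes \cdots \otimes \delta_r$ appears with multiplicity one in the relevant Jacquet module of the irreducible tempered $\tau$), one gets $\varepsilon_\theta(\tau) = \prod_j \varepsilon_\theta(\delta_j)$. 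Thus the whole theorem reduces to showing $\spit = 1$ for $\pi$ an irreducible square-integrable representation of $\mathrm{GL}_m(F)$. Such representations are generic — they admit Whittaker models — and for generic representations of $\mathrm{GL}_m(F)$ the equality $\spit = 1$ is exactly what is established by Vinroot's methods (via the uniqueness of the Whittaker functional and a symmetry argument on the Whittaker model under the transpose-inverse involution). So the final step is to cite that input and conclude.

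The main obstacle I anticipate is bookkeeping rather than deep difficulty: one must be careful that the twisted sign really is multiplicative across the blocks of a Levi subgroup under the transpose-inverse involution, and that the hypotheses of Corollary \ref{mult-one} (in particular the multiplicity-one statement for the relevant constituent of the Jacquet module, guaranteed by \cite{BJ}~\S5 for the Langlands quotient and by the Bernstein--Zelevinsky geometric lemma for the tempered-to-discrete-series step) are genuinely met, including checking that the subquotient one wants is actually a \emph{subrepresentation} with a nonzero Casselman pairing restricted to it. There is also a minor point that $\delta_j$ on $\mathrm{GL}_{m_j}(F)$ is typically not self-dual but satisfies $\delta_j^\theta \simeq \delta_j\spcheck$, which is precisely the setting of our signs, so the formalism applies directly. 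A cleaner alternative — which \S\ref{mainapp2} presumably pursues — would be to bypass Vinroot entirely and prove $\spit = 1$ for discrete series by a direct global or local argument; but within the present section the natural route is the reduction just sketched followed by the appeal to \cite{Vin}.
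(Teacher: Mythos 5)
Your proposal is essentially correct but diverges from the paper in two ways. First, the intermediate reduction from tempered representations to discrete series is unnecessary: the paper simply invokes Zelevinsky's result that \emph{every} irreducible tempered representation of ${\rm GL}_n(F)$ is already generic, so the tempered-to-square-integrable step (via the Bernstein--Zelevinsky classification and another application of Corollary~\ref{mult-one}) is a detour that buys nothing. Second, and more substantively, once you are in the generic case you end the argument by appealing to Vinroot's Whittaker-model method; the paper explicitly declines to do this (see the Remark after the theorem statement) and instead gives a self-contained argument using the conductor theory of Jacquet--Piatetski-Shapiro--Shalika. The key point there is that for generic $\pi$ one has $\dim\pi^K = 1$ for a certain compact open $K = K_c$, and after adjusting $\ta$ by an inner automorphism ${\rm Int}(a)$ with $a = {\rm diag}(1,\dots,1,\varpi^c)$ so that the modified involution $\ta'$ fixes $K$, the invariant bilinear form restricts nondegenerately to the line $V^K \times (V^{\ta'})^K$ and is automatically symmetric there, giving $\varepsilon_{\ta'}(\pi) = 1$ and hence $\spit = 1$ by~(\ref{sign-thetap}). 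Your route is logically sound (assuming the multiplicity-one and subrepresentation hypotheses in Corollary~\ref{mult-one} hold in the tempered-to-discrete-series step, which you flag but do not verify), and it shows how the descent machinery can peel all the way down to discrete series; but the paper's route is shorter (one fewer reduction) and independent of~\cite{Vin}, which the authors explicitly want.
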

\begin{Rmk}
This has been proved in many instances  by Vinroot (see \S6 of \cite{Vin} and the remarks following Cor.~3.1 of \cite{LV}). 
Our approach reduces us to the case of generic representations which is covered by his methods.  
In place of an appeal to \cite{Vin}, however, we complete the argument in a slightly different way. 
\end{Rmk}

By Proposition \ref{reduction}, we have only to consider tempered $\pi$. 
As every irreducible tempered representation of $G$ is generic \cite{Ze}, it suffices therefore to prove the following.

\begin{Prop}
Let $\pi$ be an irreducible generic representation of $G$. 
Then $\spit = 1$.
\end{Prop}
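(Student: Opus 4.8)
The plan is to exploit the uniqueness of the Whittaker model for an irreducible generic representation of $G = {\rm GL}_n(F)$, together with the defining properties of the invariant form $\hform$, to produce an explicit realization of the form and read off its symmetry. First I would fix a non-degenerate character $\psi$ of the group $U$ of upper unitriangular matrices and realize $\pi$ on its Whittaker model $\caw(\pi, \psi) \subset \text{Ind}_U^G \psi$; the key structural fact is that $\dim \text{Hom}_U(\pi, \psi) = 1$. Next I would observe that the transpose-inverse involution $\ta$, possibly after composition with the long-element inner automorphism $w_0$ (so that we work with $\ta' = \text{Int}(w_0) \circ \ta$, adjusting by the central character via (\ref{sign-thetap}) — here $\ompi({}^{\ta}w_0 \cdot w_0)$ contributes nothing problematic since one checks ${}^{\ta}w_0 \cdot w_0$ acts trivially or by $1$), carries $\caw(\pi,\psi)$ to a Whittaker model of $\pit$ with respect to the same or conjugate character. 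This gives a concrete incarnation of the intertwiner $s: \pit \to \pic$, hence of $\hform$, and the symmetry of the form then amounts to checking that the map $W(g) \mapsto W({}^{\top}g^{-1})$, suitably normalized, is its own adjoint.

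The key steps, in order, would be: (1) set up $\ta'$ and reduce, via (\ref{sign-thetap}), the claim $\spit = 1$ to the symmetry of an explicit bilinear pairing on the Whittaker model; (2) use the Gelfand-Kazhdan transpose argument — the map $W \mapsto \widetilde{W}$ with $\widetilde{W}(g) = W(w_0 \,{}^{\top}g^{-1})$ — to identify $\caw(\pi,\psi)$ with $\caw(\pic,\psi)$, noting this is exactly the mechanism behind the Gelfand-Kazhdan theorem of \S\ref{GK}; (3) write the invariant form as $(W_1, W_2) = \langle W_1, \widetilde{W_2}\rangle$ under the canonical pairing between $\caw(\pi,\psi)$ and $\caw(\pic,\psi)$, and invoke uniqueness (Schur) to know this is the form up to scalar; (4) compute the adjoint: because $\ta'$ is an involution and $W \mapsto \widetilde{W}$ is essentially involutive up to the central character correction already accounted for, the scalar $c$ in $s\spcheck = c\,s$ comes out to $+1$. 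Along the way one uses that every irreducible tempered representation is generic \cite{Ze}, which is why Proposition \ref{reduction} lets us restrict to this case.

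The main obstacle I expect is step (4) — pinning down the sign precisely rather than up to the ambiguities introduced by the choice of Haar measure, the choice of $\psi$, and the passage from $\ta$ to $\ta'$. One must be careful that the "doubling" $W \mapsto \widetilde{\widetilde{W}}$ returns to $W$ exactly (not merely up to a scalar that could be $-1$), which forces a clean bookkeeping of the $w_0$-twist and the character $\psi$ versus $\psi^{-1}$; getting a genuine $\psi$-Whittaker functional back on the nose is where the conjugation by $w_0$ is essential. A secondary subtlety is verifying that the canonical pairing between the Whittaker model of $\pi$ and that of $\pic$ is symmetric under the interchange $\pi \leftrightarrow \pic$ in the appropriate sense; this is a formal but not entirely automatic point, and I would handle it by writing both sides as integrals over $U \backslash P_n$ (the mirabolic) and checking that the substitution $g \mapsto {}^{\top}g^{-1}$ preserves this quotient and the relevant measure. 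Once these compatibilities are in hand, the conclusion $c = +1$, i.e. $\spit = 1$, is immediate.
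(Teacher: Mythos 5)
Your approach is genuinely different from the paper's. You propose to realize the invariant form directly on the Whittaker model via the Gelfand--Kazhdan involution $W \mapsto \widetilde W$, then track the sign through the adjunction; this is essentially the route taken by Vinroot in \cite{Vin}, which the paper explicitly sidesteps (``In place of an appeal to \cite{Vin}, however, we complete the argument in a slightly different way''). The paper's own proof is shorter and avoids every one of the bookkeeping issues you flag: it invokes the newform theory of Jacquet--Piatetski-Shapiro--Shalika \cite{cond} to produce a compact open subgroup $K = K_c$ with $\dim \pi^K = 1$, replaces $\ta$ by $\ta' = \mathrm{Int}(a)\circ\ta$ with $a = \mathrm{diag}(1,\dots,1,\varpi^c)$ so that ${}^{\ta'}K = K$ (and $\ompi({}^\ta a\,a)=\ompi(1)=1$, so the sign is unchanged by (\ref{sign-thetap})), and then observes that the invariant form restricts non-degenerately to the line $V^K \times (V^{\ta'})^K$ --- and a non-degenerate bilinear form on a one-dimensional space is automatically symmetric. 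No Whittaker integrals, no choice of $\psi$ vs.\ $\psi^{-1}$, no measure computations.

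That said, your outline is not wrong, and you have correctly located the danger spot: step (4), pinning down the scalar $c$ in $s^\vee = c\,s$, is where the $w_0$-twist, the passage between $\psi$ and $\psi^{-1}$, and the exact normalization of the mirabolic pairing all interact, and you have not actually resolved them --- you describe them as ``the main obstacle I expect.'' In particular $\widetilde{\widetilde W}$ is not $W$ but rather $W$ composed with conjugation by $w_0$, so the ``essentially involutive'' claim needs care, and the self-adjointness in (3)--(4) is not a formality. The argument can be completed along your lines, but as written it has a genuine gap at exactly the point you yourself identify. It is worth comparing with the paper's device of a $\ta'$-stable line of fixed vectors: once you have a non-degenerate invariant form on a one-dimensional space, the sign question evaporates, which is the cleaner way to ``read off'' symmetry that you were after.
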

\begin{proof}
Since $\pi$ is generic, the main result of 
\cite{cond} (see also  \cite{J, Mat}) says 
there is a nonnegative integer $c$ (in fact, a unique such integer) 
such that 
$\dim \pi^K = 1$ where $K=K_c$  is the compact open subgroup
given in block matrix form by 
\[
K  =  \begin{bmatrix}
                     {\rm GL}_{n-1}(\mathfrak{o}) &\mathfrak{o} \\
                        \mathfrak{p}^c   & 1 + \mathfrak{p}^c
          \end{bmatrix}.  
\]
(Here $\mathfrak{o}$ is the valuation ring of 
$F$ and $\mathfrak{p}$ is its unique maximal ideal, and we   
interpret $K$ as ${\rm GL}_n(\mathfrak{o})$ in the case $c = 0$).   
Of course, 
\[
{}^{\ta}K  =  \begin{bmatrix}
                     {\rm GL}_{n-1}(\mathfrak{o}) &\mathfrak{p}^c \\   
                              \mathfrak{o} &1 + \mathfrak{p}^c
          \end{bmatrix}.
\]

We fix a uniformizer $\varpi$ and set $a  =   \text{diag}\,(1, \ldots, 1, \varpi^c)$ and 
\[
        \theta' = \text{Int}\,(a) \circ \ta. 
\]
It is immediate that $ \theta'$ is again an involution. By (\ref{sign-thetap}), 
\[
     \spip = \spit.
   \]
To complete the proof, we show that $\spip = 1$. 
Indeed, the involution $ \theta'$ was constructed so that ${}^{\theta'}K = K$, 
and hence      
\begin{align*}
   \dim \, (\pi^{\theta'})^K  &=  \dim \, \pi^{({}^{\theta'}K)} \\
                               & =  \dim \, \pi^K \\
                               & =  1.
\end{align*} 
A non-degenerate $G$-invariant form on $V \times V^{\ta'}$ remains non-degenerate on restriction to  
$V^K \times (V^{\theta'}){}^{K}$.  Since $V^K$ is a line, it follows that $\spip = 1$. 
\end{proof}

\section{Twisted signs for general linear groups II}   \label{mainapp2}
We continue with the notation of the preceding section. Thus $G = {\rm GL}_n(F)$ and 
${}^{\ta}g =  {}^{\top}g^{-1}$, for $g \in G$. We offer a second proof of Theorem~\ref{mainapp} 
as a simple illustration of the way in which ordinary and twisted signs are closely intertwined.
As noted in \S\ref{mainapp}, it suffices by Proposition~\ref{reduction} to establish the following special case.

\begin{Thm}
Suppose $\pi$ is a smooth irreducible tempered representation of $G$. Then $\spit= 1$. 
\end{Thm}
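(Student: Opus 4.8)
The goal is to prove $\spit = 1$ for an irreducible smooth \emph{tempered} representation $\pi$ of $G = {\rm GL}_n(F)$, with ${}^{\ta}g = {}^{\top}g^{-1}$. The first proof (Section~\ref{mainapp}) used the existence of a new vector via Jacquet--Piatetski-Shapiro--Shalika; the point of this second proof is to avoid that and instead exploit the interplay between twisted and ordinary signs together with the classification of tempered representations of ${\rm GL}_n$. First I would recall that every irreducible tempered representation of ${\rm GL}_n(F)$ is fully induced from a discrete series representation of a Levi subgroup $M = {\rm GL}_{n_1} \times \cdots \times {\rm GL}_{n_r}$, say $\pi = \indp(\tau)$ with $\tau = \tau_1 \otimes \cdots \otimes \tau_r$ and each $\tau_i$ an irreducible discrete series (equivalently, essentially square-integrable) representation of ${\rm GL}_{n_i}(F)$; such an induced representation is always irreducible (Bernstein--Zelevinsky). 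Because $\ta$ sends $P$ to $\overline P$ and acts on $M$ by the transpose-inverse of each block (possibly composed with the flip of the blocks if the partition is not $\ta$-stable), Proposition (the descent proposition) reduces the computation of $\spit$ to a twisted sign on $M$; so it suffices to handle the case where $\pi = \tau$ is itself an irreducible \emph{discrete series} representation of ${\rm GL}_n(F)$, and then assemble the tensor factors.

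Next I would reduce the discrete series case to the \emph{supercuspidal} case. By the Bernstein--Zelevinsky classification, an irreducible discrete series representation of ${\rm GL}_n(F)$ is the unique irreducible quotient (Langlands) of an induced representation from a segment $\{\rho, \rho\nu, \ldots, \rho\nu^{d-1}\}$, where $\rho$ is supercuspidal on ${\rm GL}_{n/d}(F)$ and $\nu = |\det|$; and $\pi^\ta \simeq \pic$ forces $\rho^\ta \simeq \rho\spcheck$ (a matching of segments, using that $\nu^\ta = \nu^{-1}$ essentially as in Lemma~\ref{tafan}). Invoking Corollary~\ref{mult-one} (the multiplicity-one consequence of Casselman's pairing, applied to an appropriate Jacquet module along the standard parabolic with Levi $({\rm GL}_{n/d})^d$), I get $\spit = \varepsilon_\ta(\rho \otimes \cdots \otimes \rho)$ — or more precisely the twisted sign of $\rho$ on ${\rm GL}_{n/d}(F)$ for the block-transpose-inverse involution, up to a contribution from the block-permutation that I would need to check contributes trivially. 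So the whole theorem comes down to: \textbf{the transpose-inverse twisted sign of an irreducible supercuspidal representation of ${\rm GL}_m(F)$ is $1$}.

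For the supercuspidal case I would use that an irreducible supercuspidal $\rho$ of ${\rm GL}_m(F)$ is generic (it has a Whittaker model), together with the fact that the standard intertwining operator / Whittaker functional argument pins down the sign: the Whittaker model gives a $G$-equivariant realization on which one can compare the invariant form $(\,,\,)$ with the transpose-inverse symmetry of the Whittaker pairing, showing the form is symmetric. Alternatively — and this is the route I'd actually take to stay ``independent of \cite{Vin}'' and self-contained — I would relate $\spit$ to the \emph{ordinary} sign of a self-dual twist: twisting $\rho$ by an unramified character to make it self-dual, then $\varepsilon(\rho') = \spit[\rho'] \cdot \omega_{\rho'}(\text{central element})$ by (\ref{sign-thetap}), and a self-dual supercuspidal of ${\rm GL}_m(F)$ is either orthogonal or symplectic according to whether its Langlands parameter is; for ${\rm GL}_m$ one checks directly (e.g.\ from the fact that the exterior-square or symmetric-square $L$-function has a pole, or by an explicit model) that the sign works out to $1$.

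\textbf{Main obstacle.} The genuinely delicate point is the supercuspidal base case — showing the invariant bilinear form on a self-dual (or transpose-inverse-dual) supercuspidal of ${\rm GL}_m(F)$ is symmetric, without simply citing \cite{Vin}. I expect to handle this through the Whittaker model: a generic $\pi$ with $\pit\simeq\pic$ has its invariant form expressible via the Whittaker pairing, and the transpose-inverse involution acts on Whittaker functions in a way (via $w_0\,{}^{\top}g^{-1}\,w_0$ preserving the standard character) that makes the resulting form manifestly symmetric. A secondary bookkeeping obstacle is tracking the block-permutation part of $\ta$ on the Levi $M$ when the partition $(n_1,\ldots,n_r)$ is not fixed by $\ta$: one must check that pairing up $\tau_i$ with $\tau_j^\ta{}\spcheck \simeq \tau_j$ contributes a sign $+1$ (a ``hyperbolic pair'' contributes $+1$), so that only the genuinely $\ta$-fixed factors — handled by the supercuspidal computation — matter.
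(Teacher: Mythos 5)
Your plan is a genuinely different route from the paper's. The paper's second proof goes \emph{upward}: it realizes $G = {\rm GL}_n(F)$ as the Siegel Levi $M$ inside a split odd orthogonal group $\widetilde G$, uses Sauvageot's density theorem to find $\nu \in \fan$ with $\widetilde\pi := \iota_P^{\widetilde G}(\pi\nu)$ irreducible, checks $\widetilde\pi$ is self-dual, transports the ordinary sign $\varepsilon(\widetilde\pi)$ to $\spit$ via the inner involution $\text{Int}(n)$, the relation (\ref{sign-thetap}), and Proposition~\ref{reduction}, and finally invokes Prasad's theorem that an irreducible generic representation of $\widetilde G$ has sign $+1$ (with genericity of $\widetilde\pi$ coming from Rodier's heredity theorem). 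You instead go \emph{downward}: tempered $\to$ discrete series $\to$ supercuspidal using Jacquet modules inside ${\rm GL}_n$ itself, and then try to settle the supercuspidal base case directly.

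There are two genuine gaps in your plan. First, in the step tempered $\to$ discrete series you want to apply Corollary~\ref{mult-one} to $\pi = \indp(\tau_1\otimes\cdots\otimes\tau_r)$ with the $\tau_i$ discrete series, but the hypothesis of that corollary is that $\tau = \tau_1\otimes\cdots\otimes\tau_r$ occurs with multiplicity \emph{one} as a composition factor of $\pi_N$; by the geometric lemma this fails precisely when some $\tau_i$'s are isomorphic (each element of $W_M\backslash W/W_M$ stabilizing $\tau$ contributes a copy), and nothing in your argument handles that. The paper sidesteps this by working with a Langlands triple on $\widetilde G$, where multiplicity one in the Jacquet module is automatic. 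Second, your supercuspidal base case is not actually carried out: the Whittaker-pairing argument you sketch is essentially Vinroot's, which you state you want to avoid, and the fallback ``twist to self-dual, read the sign off the Langlands parameter via exterior/symmetric square $L$-functions'' conflates the type of the parameter with the sign of the invariant form without justification — exactly the nontrivial content one would need to prove. Note also that the reduction to generic representations you invoke is what drives the paper's \emph{first} proof (\S\ref{mainapp}), via the JPSS new-vector line; the paper's second proof (\S\ref{mainapp2}) was designed to be an independent illustration of the ordinary/twisted interplay across a Levi embedding, not a further reduction inside ${\rm GL}_n$.
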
 

The strategy of the proof is to view $G$ as the Siegel Levi subgroup of a suitable split classical group $\widetilde{G}$ 
and to use Proposition~\ref{reduction} to relate the twisted sign of $\pi$ and the ordinary sign of an irreducible self-dual representation 
$\widetilde{\pi}$ of $\widetilde{G}$ (obtained from a certain twist of $\pi$ via parabolic induction). 
The representation $\widetilde{\pi}$ is generic and so its sign is known by a general result of Dipendra Prasad \cite{P1}.

\begin{proof}
Consider the quadratic form in $2n+1$ variables given by
\[
  q(\xi_1, \ldots, \xi_n, \gamma, \eta_1, \ldots, \eta_n)  = \sum_{i=1}^n \xi_i \eta_i + \gamma^2.
  \]
We take $\widetilde{G}$ to be the $F$-points of the identity component of the corresponding orthogonal group.   
(Thus  $\widetilde{G}$ is the $F$-points of the split group ${\rm SO}_{2n+1}$ when the characteristic of $F$ is not two. In  
the case of characteristic two, the isometry group of $q$ is already connected.) 
We realize $\widetilde{G}$ as a group of invertible $(2n+1) \times (2n+1)$ matrices over $F$ in the obvious way. 
The diagonal elements  
\[
  \text{diag}(a_1, \ldots, a_n, 1, a_1^{-1}, \ldots, a_n^{-1}) 
  \]
then form (the $F$-points of) a maximal split torus.  Writing $\epsilon_i$ for the rational character
 \[
  \text{diag}(a_1, \ldots, a_n, 1, a_1^{-1}, \ldots, a_n^{-1})  \longmapsto a_i
  \]
and using additive notation, we take $\{\epsilon_1-\epsilon_2, \ldots, \epsilon_{n-1} - \epsilon_n, \epsilon_n \}$ as our fixed simple system  
in the set of roots of $A$ in $\widetilde{G}$. 
The subset $\Theta = \{\epsilon_1-\epsilon_2, \ldots, \epsilon_{n-1} - \epsilon_n\}$ corresponds to the standard Siegel parabolic 
subgroup $P$ of $\widetilde{G}$. This consists of the block upper-triangular matrices
\[
       \begin{bmatrix} 
      g &\ast & \ast \\
      0 & 1 & \ast \\
      0 & 0 &   {}^{\top}g^{-1}
      \end{bmatrix}, \quad g \in G.
 \]     
The group 
\[
M = \{ \begin{bmatrix} 
      g &0 & 0 \\
      0 & 1 & 0 \\
      0 & 0 &   {}^{\top}g^{-1}
      \end{bmatrix} : g \in G \}.
\]
is the corresponding Levi component  of $P$.

We use the obvious isomorphism 
\[
g \mapsto \begin{bmatrix} 
      g &0 & 0 \\
      0 & 1 & 0 \\
      0 & 0 &   {}^{\top}g^{-1}
      \end{bmatrix}:G \to M
      \]
to view $\pi$ as an irreducible tempered representation of $M$. 

Consider the element $n \in \widetilde{G}$ given by
\[
n = \begin{bmatrix} 
          0  & 0  &I_n \\
          0  & (-1)^n & 0 \\
          I_n  &0   &0 
          \end{bmatrix}
\]
and set $\widetilde{\theta} = \text{Int} (n)$. Since $n^2 = 1$, the automorphism $\widetilde{\theta}$ is clearly an involution on $\widetilde{G}$. 
Further, it is immediate that $\widetilde{\theta}$ satisfies the three conditions of \S\ref{setup}. Note  
\[
   \widetilde{\theta}\bigl( \begin{bmatrix} 
      g &0 & 0 \\
      0 & 1 & 0 \\
      0 & 0 &   {}^{\top}g^{-1}
      \end{bmatrix} \bigr)   = 
      \begin{bmatrix} 
      {}^{\top}g^{-1} &0 & 0 \\
      0 & 1 & 0 \\
      0 & 0 &   g
      \end{bmatrix}, \quad g \in G. 
      \]
In other words, $\widetilde{\theta} \, | \, M $ corresponds to $\theta$ under the above isomorphism      
between $G$ and $M$. 

As a very special case of Theorem~3.2 of \cite{Sauv}, there is a $\nu \in \fan$ such that the normalized induced representation 
$\widetilde{\pi} = \iota_P^{\widetilde{G}}(\pi \nu)$ is irreducible.  We need to observe that $\widetilde{\pi}$ is self-dual. For this, note  
${}^nP = \overline{P}$, the $M$-opposite of $P$, and ${}^{n}(\pi \nu) \simeq \pic \nu^{-1}$ (by Gelfand-Kazhdan's theorem). Thus 
\begin{align*}
 \iota_P^{\widetilde{G}}(\pi \nu)  &\simeq {}^{n}  \iota_P^{\widetilde{G}} (\pi \nu) \\
                                                         &\simeq  \iota_{\overline{P}}^{\widetilde{G}}(\pic \nu^{-1}) \\
                                                         &\simeq  \iota_{\overline{P}}^{\widetilde{G}}(\pi \nu)\spcheck. 
\end{align*}
It is a fundamental property of parabolic induction that the (multi-)set of composition factors of a parabolically induced representation depends only
on the inducing representation of the Levi subgroup, i.e., it is independent of the parabolic subgroup with the given Levi component. In particular, 
$\iota_{\overline{P}}^{\widetilde{G}}(\pi \nu)\simeq \iota_P^{\widetilde{G}}(\pi \nu)$ and thus $\widetilde{\pi} \simeq \widetilde{\pi}\spcheck$. 
(If $F$ does not have characteristic two,  we can instead appeal  to \cite{MVW} Chap.~4 II.1 (restated in \S\ref{MVW}) which gives that  
every smooth irreducible representation of $\widetilde{G}$ is self-dual.)

We clearly also have $\widetilde{\pi}^{\,\widetilde{\theta}} \simeq \widetilde{\pi}\spcheck$ (as $\widetilde{\theta}$ is an inner automorphism). 
By (\ref{sign-thetap}), 
\[
     \varepsilon_{}(\widetilde{\pi})  =   \varepsilon_{\mathstrut  \widetilde{\theta}}(\widetilde{\pi}).
\]
Further, by Proposition~\ref{reduction}, 
\[
       \varepsilon_{\mathstrut \widetilde{\theta}}(\widetilde{\pi}) =  \varepsilon_\theta(\pi).
\]
We now show that 
\begin{equation*}
 \varepsilon(\widetilde{\pi}) = 1.      \tag{$\ast$}
\end{equation*}
As noted in \S\ref{mainapp}, the representation $\pi$ is generic and thus $ \widetilde{\pi}$ is also generic 
(by Rodier's heredity theorem \cite{Rod}). By \cite{P1} page 448, the sign of an
irreducible generic representation of $\widetilde{G}$ is always one. This establishes ($\ast$) and so completes the proof. 
\end{proof}

\end{document}